\newtheorem{theorem}{Theorem}[section]
\newtheorem{lemma}[theorem]{Lemma}
\newtheorem{cor}[theorem]{Corollary}
\theoremstyle{definition}
\newtheorem{remark}{Remark}[section]
\numberwithin{equation}{section}
\newcommand{\bC}{\mathbb{C}}
\newcommand\E{\mathbb{E}}
\newcommand\R{\mathbb{R}}
\newcommand\e{\varepsilon}
\begin{document}

\title[Submartingale Inequalities]{Burkholder inequalities for submartingales, 
Bessel processes and conformal martingales}\thanks{To appear in American Journal of Mathematics}

\author{Rodrigo Ba\~nuelos}\thanks{R. Ba\~nuelos is supported in part  by NSF Grant
\# 0603701-DMS}
\address{Department of Mathematics, Purdue University, West Lafayette, IN 47907, USA}
\email{banuelos@math.purdue.edu}
\author{Adam Os\c ekowski}\thanks{A. Os\c ekowski is supported in part by MNiSW Grant N N201 364436}
\address{Department of Mathematics, Informatics and Mechanics, University of Warsaw, Banacha 2, 02-097 Warsaw, Poland}
\email{ados@mimuw.edu.pl}

\begin{abstract}
The motivation for this paper comes from the following question on comparison of norms of conformal martingales $X$, $Y$ in $\R^d$, $d\geq 2$. Suppose that $Y$ is differentially subordinate to $X$. For $0<p<\infty$, what is the optimal value of the constant $C_{p,d}$ in the inequality
$$ ||Y||_p\leq C_{p,d}||X||_p ?$$
We answer this question by considering a more general related problem for nonnegative submartingales. This enables us to study extension of the above inequality to the case when $d>1$ is not an integer, which has further interesting applications to stopped Bessel processes and to the behavior of smooth functions on Euclidean domains.  The inequality for conformal martingales, which has its roots on the study of the $L^p$ norms of the Beurling-Ahlfors singular integral operator \cite{BW}, extends a recent result of Borichev, Janakiraman and Volberg \cite{BJV2}.
\end{abstract}

\maketitle

\section{Introduction}
Let $(\Omega,\mathcal{F},\mathbb{P})$ be a complete probability space, filtered by $(\mathcal{F}_t)_{t\geq 0}$, a nondecreasing family of sub-$\sigma$-fields of $\mathcal{F}$. Let $X$, $Y$ be adapted, $\mathbb{R}^d$-valued continuous-path semimartingales. Denote by $[X,X]$ the quadratic variation process of $X$. We refer the reader to Dellacherie and Meyer \cite{DM} for the definition in the one-dimensional case. We set $[X,X]=\sum_{j=1}^d [X^j,X^j]$ in the vector-valued setting where $X^j$ denotes the $j$-th coordinate of $X$. Using the polarization formula we define the quadratic covariance of $X$ and $Y$ by 
$$ [X,Y]=\frac{1}{4}\big([X+Y,X+Y]-[X-Y,X-Y]\big).$$
Following  Ba\~nuelos and Wang \cite{BW} and  Wang \cite{W}, we say that $Y$ is differentially subordinate to $X$ if the process $([X,X]_t-[Y,Y]_t)_{t\geq 0}$ is nondecreasing and nonnegative as a function of $t$. Real-valued semimartingales $X$ and $Y$ are orthogonal if their quadratic covariance $[X,Y]$ has constant trajectories with probability $1$. Finally, we say that $X$ is conformal (or analytic) if for any $1\leq i<j\leq d$, the coordinates $X^i$, $X^j$ are orthogonal and satisfy $[X^i,X^i]=[X^j,X^j]$.
Conformal martingales arise naturally from the composition of analytic functions and Brownian motion in the complex plane and have been studied for many years; see \cite{GetSha} and \cite[p.~177]{RY}. 

If $X$ and $Y$ are martingales, then the differential subordination of $Y$ to $X$ implies many interesting inequalities which have numerous applications in various areas of analysis and  probability.   An excellent source of information in the discrete-time setting is the survey \cite{B1} by Burkholder. One can also find there a detailed description of his method which enables one to obtain sharp versions of such estimates. By an approximation argument and a careful use of It\^o formula, these results can be extended to the continuous-time setting; see the paper by Wang \cite{W}. For other more recent applications of Burkholder's method, the use of his celebrated ``rank-one convex" function and some of its connections to harmonic functions and singular integrals, see \cite{AstIwaPraSak}, \cite{BMH}, \cite{BW0}, \cite{BW}, \cite{B-1.1}, \cite{GME},  \cite{Iwa3}, \cite{JanVol1}, \cite{Jan1},  \cite{NazVol},
 \cite{O1},  \cite{O2},  \cite{O4},  \cite{O5},  and the overview paper \cite{Ban1} which contains extensive list of references on this topic.

Here we recall the celebrated inequality first proved by Burkholder in \cite{B0} in the discrete-time case and extended to the above setting by Wang \cite{W}. Throughout this paper, $||X||_p=\sup_{t\geq 0}||X_t||_p$ for $0<p<\infty$.
\begin{theorem}
Assume that $X$, $Y$ are $\R^d$-valued martingales such that $Y$ is differentially subordinate to $X$. Then
\begin{equation}\label{Burkhin}
 ||Y||_p\leq (p^*-1)||X||_p,\qquad 1<p<\infty,
\end{equation}
where $p^*=\max\{p,p/(p-1)\}$. The constant is the best possible even for $d=1$.
\end{theorem}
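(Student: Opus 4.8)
The plan is to use Burkholder's method: find a special function $U:\R^d\times\R^d\to\R$ such that (i) $U(x,y)\ge 0$ whenever $|y|\le|x|$ (or more precisely a majorant-type condition $U(x,y)\ge |y|^p-(p^*-1)^p|x|^p$), (ii) $U(x,y)\le 0$ on the diagonal-type set where $|x|=|y|$ (so that the process starts nonpositive, using that differential subordination forces $[Y,Y]_0\le[X,X]_0$, hence $|Y_0|\le|X_0|$), and (iii) $U$ is superharmonic in an appropriate sense along the paths of the pair $(X,Y)$, i.e. satisfies the concavity/diagonal second-order inequality that, combined with differential subordination, makes $t\mapsto \E\,U(X_t,Y_t)$ nonincreasing. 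For $1<p<\infty$ Burkholder's explicit function is
\begin{equation*}
U_p(x,y)=\alpha_p\bigl(|y|-(p^*-1)|x|\bigr)\bigl(|x|+|y|\bigr)^{p-1},
\end{equation*}
with $\alpha_p$ a suitable positive normalizing constant; one checks it dominates $|y|^p-(p^*-1)^p|x|^p$ and is nonpositive when $|y|\le(p^*-1)|x|$, in particular on $|x|=|y|$ for $p\ge2$, with the symmetric argument handling $1<p<2$.

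First I would reduce to the case where $X$ and $Y$ are bounded (by stopping times $\tau_n$ with $|X|,|Y|\le n$) and where $[X,X]_0>0$, so that the It\^o calculus below is justified and all integrability issues vanish; the general case follows by Fatou/monotone convergence after passing to the limit. Next I would apply It\^o's formula to $U_p(X_t,Y_t)$. This splits the increment into a martingale (local-martingale) part, which vanishes in expectation, and a finite-variation part built from the Hessian of $U_p$ contracted against $d[X,X]$, $d[X,Y]$ and $d[Y,Y]$. The crucial pointwise estimate is that this second-order form is $\le 0$: one uses the structural inequality satisfied by $U_p$ together with the differential subordination hypothesis $d[Y,Y]\le d[X,X]$ (as measures) to dominate the $d[Y,Y]$ contribution, reducing everything to a single concavity-type inequality for $U_p$ that Burkholder established. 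Consequently $\E\,U_p(X_t,Y_t)\le\E\,U_p(X_0,Y_0)\le0$, and then property (i) gives $\E|Y_t|^p\le(p^*-1)^p\,\E|X_t|^p$; taking suprema over $t$ finishes the bound.

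The main obstacle I expect is the verification of the second-order (concavity) inequality for $U_p$ in the vector-valued, continuous-path setting — that is, showing that for all $x,y\in\R^d$ and all symmetric nonnegative test matrices arising from the quadratic (co)variations, the relevant quadratic form in the Hessian of $U_p$ is nonpositive once $d[Y,Y]\le d[X,X]$. Because $U_p$ depends on $x,y$ only through $|x|$ and $|y|$, one can rotate to reduce to a two- or three-dimensional computation, but $U_p$ is not smooth on the set $\{x=y=0\}$ (and only $C^1$ across $\{|x|=|y|\}$ for some $p$), so some care with a mollification/limiting argument near the singular locus is needed; this is standard in Burkholder's theory but is the technically delicate point. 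A secondary, lesser obstacle is the optimality (sharpness) claim: for $d=1$ one exhibits, for each $p$, martingale pairs (e.g.\ suitable stopped two-dimensional Brownian motions, or their discrete analogues such as the classical construction with a random walk) for which the ratio $\|Y\|_p/\|X\|_p$ approaches $p^*-1$, showing the constant cannot be improved; this part is classical and due to Burkholder, so I would only sketch it.
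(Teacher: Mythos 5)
The paper does not prove this theorem; it is stated as a recalled result, credited to Burkholder \cite{B0} (discrete time) and Wang \cite{W} (continuous paths), so there is no in-paper proof to compare yours against. Your sketch---Burkholder's explicit special function $U_p$, its majorant, initial, and diagonal-concavity properties, It\^o's formula combined with mollification near the non-smooth locus, and Burkholder's extremal example for sharpness---is correct in outline and is precisely the argument given in those cited sources; it is also the very template the authors extend, with a more elaborate special function built from the solution $g_{p,d}$ of the Legendre-type ODE \eqref{diff}, to prove their own Theorem \ref{mainth}.
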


The Beurling-Ahlfors operator 
on the complex plane $\bC$  in the singular integral defined by 
\begin{equation}
   \label{eq:ba}
  \mathcal{B}f(z)=\frac{1}{\pi}\, p.v. \int_{\bC}
   \frac{f(w)}{(z-w)^2}\mbox{d}m(w)\,, \quad z\in \bC \ .
\end{equation} 
This operator plays a fundamental role in many areas of analysis and its applications. For some of these connections, we refer the reader to \cite{AstIwaMar}. As a Calder\'on-Zygmund singular integral, $\mathcal{B}$ is bounded on $L^p(\bC)$, for $1<p<\infty$, and the now  celebrated conjecture of T. Iwaniec \cite{Iwa} asserts that  $||\mathcal{B}||_p=p^*-1$.  Burkholder's inequality \eqref{Burkhin} has been crucial in the investigation of Iwaniec's conjecture.   Indeed, the first explicit upper bound  $4(p^*-1)$ for $||\mathcal{B}||_p$ obtained in \cite{BW} used a stochastic integral representation of the operator together with the inequality \eqref{Burkhin}. In addition, the improvement $2(p^*-1)$ obtained by Nazarov and Volberg in \cite{NazVol}, while avoiding the stochastic representation from \cite{BW}, was also based on the inequality \eqref{Burkhin} applied to Haar martingales.   It is observed in \cite[p.~599]{BW} that  in addition to the differential subordination, the martingales arising in the study of the Beurling-Ahlfors operator are in fact conformal martingales and hence, as conjectured in \cite{BW}, one should expect better bounds than the $p^*-1$ of Burkholder. By slightly modifying Burkholder's arguments, the following inequality is established in \cite{BanJan} which takes advantage of the conformality. 

\begin{theorem}
Suppose that $X$, $Y$ are two $\R^d$-valued martingales such that $Y$ is conformal and $\sqrt{\frac{p+d-2}{d(p-1)}}Y$ is differentially subordinate to $X$. Then for $2<p<\infty$,
$$ ||Y||_p\leq (p-1)||X||_p.$$
\end{theorem}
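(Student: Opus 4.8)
The natural approach is Burkholder's method. Write $c=\tfrac{p+d-2}{d(p-1)}$, so that the hypothesis says $Y$ is conformal and $c\,d[Y,Y]\le d[X,X]$ (as measures), together with $\sqrt c\,\abs{Y_0}\le\abs{X_0}$. Since $\abs{Y}$ is a nonnegative submartingale, the assertion is really an $L^p$ comparison of it with the dominating martingale $X$; and conformality turns this into a Bessel-type problem, because $\langle Y^i,Y^j\rangle=\tfrac1d\delta_{ij}[Y,Y]$, so that by It\^o's formula $\abs{Y}^2$, run in the time scale $d\tau=\tfrac1d\,d[Y,Y]$, is a squared Bessel process of dimension $d$. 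I would nonetheless prove the inequality directly, via a special function.

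The plan is to construct a radial function $U(x,y)=u(\abs{x},\abs{y})$ on $\R^d\times\R^d$ with: (i) $U(x,y)\ge\abs{y}^p-(p-1)^p\abs{x}^p$ for all $x,y$; (ii) $U(x,y)\le0$ whenever $\sqrt c\,\abs{y}\le\abs{x}$; and (iii) $\bigl(U(X_t,Y_t)\bigr)_{t\ge0}$ is a supermartingale for every pair $(X,Y)$ as in the theorem. Given such a $U$, the theorem follows in the standard manner: by (ii) and $\sqrt c\,\abs{Y_0}\le\abs{X_0}$ we get $\E\,U(X_0,Y_0)\le0$; by (iii), $\E\,U(X_t,Y_t)\le\E\,U(X_0,Y_0)\le0$; and then (i) yields $\E\abs{Y_t}^p\le(p-1)^p\,\E\abs{X_t}^p$, whence $\|Y\|_p\le(p-1)\|X\|_p$. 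Justifying the supermartingale step needs the customary localization and mollification of $U$, which are routine and unrelated to the restriction $p>2$.

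For the function I would start from Burkholder's function associated with the constant $p-1$,
$$ U_p(x,y)=p\Bigl(1-\tfrac1p\Bigr)^{p-1}\bigl(\abs{y}-(p-1)\abs{x}\bigr)\bigl(\abs{x}+\abs{y}\bigr)^{p-1}, $$
which for $p\ge2$ has property (i) and, because $c\ge(p-1)^{-2}$ in that range, also (ii) --- the bound $\sqrt c\,\abs{y}\le\abs{x}$ then forces $\abs{y}\le(p-1)\abs{x}$, hence $U_p(x,y)\le0$. The real work is (iii), and here conformality is decisive. By It\^o's formula the drift of $U(X_t,Y_t)$ is $\tfrac12\bigl(\langle dX,U_{xx}\,dX\rangle+2\langle dX,U_{xy}\,dY\rangle+\langle dY,U_{yy}\,dY\rangle\bigr)$, and conformality collapses the last term to $\tfrac1d(\Delta_yU)\,d[Y,Y]$: it sees $U$ only through the Laplacian in $y$, and with the favourable weight $\tfrac1d$, rather than through $U_{yy}$ in an adversarial direction. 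Using $d[Y,Y]\le\tfrac1c\,d[X,X]$, estimating the cross term by Kunita--Watanabe, and unwinding the radial form of $U$, condition (iii) reduces to a pointwise inequality involving the derivatives $u_r,u_s,u_{rr},u_{rs},u_{ss}$ and the parameters $c$ and $d$.

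The main obstacle is to verify this pointwise inequality and, above all, to check that the \emph{smallest} $c$ for which it holds for $U_p$ is exactly $c=\tfrac{p+d-2}{d(p-1)}$; this is where the constant $\sqrt{\tfrac{p+d-2}{d(p-1)}}$ in the statement comes from. One anticipates that the extremal infinitesimal configuration is the one in which Kunita--Watanabe is saturated and $dX_t$ is aligned with $X_t$, so that the reduced inequality becomes an equality along a one-parameter family, which pins down $c$. Two points need extra care: the region where $\abs{y}$ is small relative to $\abs{x}$, where $\Delta_yU_p$ can fail to have the right sign --- there $U_p<0$ and one either argues directly or passes to a smoothed variant of $U_p$, and this is the ``slight modification'' of Burkholder's function the argument requires; and the degeneracies at $x=0$ or $y=0$, handled by approximation. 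Finally $p>2$ is genuinely necessary: for $1<p\le2$ one has $p-1<p^{*}-1$, so $U_p$ no longer majorizes $\abs{y}^p-(p-1)^p\abs{x}^p$, and a different special function --- taken up elsewhere in the paper --- is needed there.
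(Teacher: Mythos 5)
The statement you are addressing is not actually proved in this paper: it is quoted from Ba\~nuelos and Janakiraman~\cite{BanJan}, where it is obtained by a modification of Burkholder's argument. Your blind reconstruction does land on the same framework as \cite{BanJan}---Burkholder's function $U_p(x,y)=p(1-1/p)^{p-1}(|y|-(p-1)|x|)(|x|+|y|)^{p-1}$, the majorization $U_p\geq V_p$, and the crucial observation that conformality of $Y$ lets the It\^o drift see $U$ in $y$ only through $\tfrac1d\Delta_y U\,\mathrm{d}[Y,Y]$ rather than through the worst eigenvalue of $U_{yy}$. Your verification that $c=\frac{p+d-2}{d(p-1)}\geq(p-1)^{-2}$ for $p\geq 2$, so that $\sqrt{c}\,|Y_0|\leq|X_0|$ forces $U_p(X_0,Y_0)\leq 0$, is correct (it reduces to $(p-2)(p-1+d)\geq 0$). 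So the outline is on the right track.

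What is missing is the entire analytic content. You reduce condition~(iii) to ``a pointwise inequality involving the derivatives $u_r,u_s,u_{rr},u_{rs},u_{ss}$ and the parameters $c$ and $d$,'' and immediately declare this to be ``the main obstacle'' without writing the inequality down, let alone proving it for $U_p$. But that inequality \emph{is} the theorem: for the radial $U_p$ one must compute $\Delta_y U_p=u_{ss}+\frac{d-1}{s}u_s$, bound the $x$-Hessian and cross terms (your Kunita--Watanabe remark needs care here because one must control $\sum_{i,j}x^ix^j\,\mathrm{d}[X^i,X^j]$ against $|x|^2\,\mathrm{d}[X,X]$, i.e.\ against the trace rather than an individual entry), and check that the resulting drift is nonpositive under $c\,\mathrm{d}[Y,Y]\leq\mathrm{d}[X,X]$. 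The specific constant $\frac{p+d-2}{d(p-1)}$ emerges precisely from this calculation and is not ``anticipated'' from general principles. You also flag---but again do not resolve---that $\Delta_y U_p$ can have the wrong sign in the region where $|y|$ is small relative to $|x|$, and you hedge between using $U_p$ directly and passing to ``a smoothed variant'' without deciding which. Until the pointwise inequality is written down and proved (and the bad region dealt with), what you have is a plausible plan, not a proof.
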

In particular, if $d=2$, $Y$ is conformal and differentially subordinate to $X$, then
\begin{equation}\label{BanJanin}
 ||Y||_p\leq \sqrt{\frac{p(p-1)}{2}}||X||_p,\qquad 2<p<\infty.
\end{equation}

This inequality was used in \cite{BanJan} to prove that $||\mathcal{B}||_p\leq 1.575(p^*-1)$, for $1<p<\infty$, which at this point is the best available bound.  The question immediately arises as to the optimal constant in \eqref{BanJanin}.  Borichev, Janakiraman and Volberg \cite{BJV1}, \cite{BJV2} established the following results in this direction.

\begin{theorem}\label{BJV}
Suppose that $X$ and $Y$ are two $\R^2$-valued martingales on the filtration of 2-dimensional Brownian motion such that $Y$ is differentially subordinate to $X$.
\begin{itemize}

\item[(i)] If $Y$ is conformal, then
$$ ||Y||_p\leq \frac{a_p}{\sqrt{2}(1-a_p)}||X||_p, \qquad 1<p\leq 2,$$
where $a_p$ is the least positive root in the interval $(0, 1)$ of the bounded Laguerre
function $L_p$. This inequality is sharp.

\item[(ii)] If $X$ is conformal,  then
$$ ||Y||_p\leq \frac{\sqrt{2}(1-a_p)}{a_p}||X||_p,\qquad 2\leq p<\infty,$$
where $a_p$ is the least positive root in the interval $(0, 1)$ of the bounded Laguerre
function $L_p$. This inequality is sharp.

\item[(iii)] If $X$ and $Y$ are both conformal, then
\begin{equation}\label{BJVin}
 ||Y||_p\leq \frac{1+z_p}{1-z_p}||X||_p,\qquad 2\leq p<\infty,
\end{equation}
where $z_p$ is the largest root in $[-1, 1]$ of the Legendre function $g$ solving 
$$(1 - s^2)g''(s) - 2sg'(s) + pg(s) = 0.$$
This inequality is sharp. 
\end{itemize}
\end{theorem}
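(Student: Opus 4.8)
All three estimates are instances of Burkholder's method, and the plan is to reduce each to a one-dimensional problem and then construct the corresponding special (Bellman) function. The first step is to pass to radial processes. If $Z$ is a conformal martingale in $\R^2\cong\bC$, then by the time-change theorem for conformal martingales (see \cite{GetSha}, \cite{RY}) $Z$ is a time-changed complex Brownian motion, so $|Z_t|=\rho_{c_t}$ with $\rho$ a $2$-dimensional Bessel process and $\mathrm{d}c_t=\tfrac12\,\mathrm{d}[Z,Z]_t$; if $Z$ is a general martingale, then $|Z|$ is a nonnegative submartingale whose martingale part has quadratic variation dominated, as a measure, by $[Z,Z]$. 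Hence in (i) the relevant pair $(|X|,|Y|)$ is (a nonnegative submartingale, a time-changed $\mathrm{BES}(2)$ process), in (ii) it is (a time-changed $\mathrm{BES}(2)$ process, a nonnegative submartingale), and in (iii) it is a pair of time-changed $\mathrm{BES}(2)$ processes, with differential subordination translating into domination (up to normalization, as a measure) of the clock of the $|Y|$-side by that of the $|X|$-side. The structural gain is that for a conformal martingale $Z$ It\^o's formula applied to $U(Z_t)$ produces the \emph{full} Laplacian term $\tfrac12\Delta U(Z_t)\,\mathrm{d}c_t$ rather than a Hessian paired with a rank-one matrix, so honest superharmonicity may be used in place of rank-one concavity, which is what permits the smaller constants.

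For each case I would then seek a function $U=U(r,s)$ on $[0,\infty)^2$ having three properties: (a) the majorization $U(r,s)\ge s^{p}-C_p^{\,p}r^{p}$; (b) the initial condition $U(r,s)\le 0$ for $s\le r$, which holds at $t=0$ under the standard requirement $|Y_0|\le|X_0|$; and (c) a Burkholder-type concavity/superharmonicity inequality, adapted to whether each of $|X|,|Y|$ enters as a nonnegative submartingale or as a Bessel-type conformal martingale and incorporating the clock domination, which guarantees that $U(|X_t|,|Y_t|)$ is a supermartingale. Granted such a $U$, the usual argument — apply It\^o's formula, localize away the local-martingale part, then invoke (c), (b), (a) in turn — yields $\E\,|Y_{t}|^{p}\le C_p^{\,p}\,\E\,|X_{t}|^{p}$, and the supremum over $t$ finishes the bound.

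To produce $U$ one solves the relevant ODE on the region $\{s>r\}$, where $U$ must coincide with an explicit ``harmonic'' profile. After the change of variables turning the Bessel generator into the Laguerre operator (cases (i)--(ii)), or the one turning the radial Laplacian on $\R^2\times\R^2$ into Legendre's operator in a ratio variable such as $\tfrac{r^{2}-s^{2}}{r^{2}+s^{2}}\in[-1,1]$ (case (iii)), the admissible solutions are precisely the bounded Laguerre function $L_p$ and the Legendre function $g$ of the statement — these being the eigenfunctions of the corresponding radial operators. The constant $C_p$ is then forced by the $C^{1}$-matching of $U$ to $s^{p}-C_p^{\,p}r^{p}$ across $\{s=r\}$, and it is precisely this matching that brings in $a_p$, the least positive zero of $L_p$, in (i)--(ii), and $z_p$, the largest zero of $g$, in (iii); as a sanity check, at $p=2$ both recipes return $C_2=1$, in agreement with the elementary bound $\E|Y_t|^{2}\le\E|X_t|^{2}$.

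The main obstacle is the construction and \emph{global} verification of $U$: one must select the branch of the Laguerre/Legendre solution for which $U$ is finite with the correct sign, extend it across $\{s=r\}$ in a $C^{1}$ and suitably concave fashion, and then check (c) off the diagonal and as $r,s\to\infty$ — this is where the special (rather than generic) nature of $L_p$ and $g$ is really used. Sharpness is an essentially independent task: one runs a time-changed complex Brownian motion up to a carefully chosen stopping time, arranging $X$ and $Y$ so that the radial part of the extremal pair follows the trajectory singled out by the Laguerre/Legendre profile, and then degenerates the parameters of the construction; computing $\|X\|_p$ and $\|Y\|_p$ along this family shows the constant cannot be lowered. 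I expect the global check of (c) to be the most delicate part.
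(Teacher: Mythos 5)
The paper itself does not prove Theorem~\ref{BJV}; it quotes it from \cite{BJV1} and \cite{BJV2}, and supplies only a new proof, and a $d$-dimensional/non-integer generalization, of part~(iii), through the Bessel-process/submartingale machinery of Theorem~\ref{mainth}. For part~(iii) your plan matches that machinery. When both $X$ and $Y$ are conformal, $|X|$ and $|Y|$ are time-changed $\mathrm{BES}(2)$ processes satisfying \eqref{assumpt} with equality, the martingale parts of $|X|$ and $|Y|$ have quadratic variations $\tfrac12[X,X]$ and $\tfrac12[Y,Y]$ so differential subordination passes to the radial pair, and the homogeneous ansatz $W(r,s)=(r+s)^pg\bigl(\tfrac{s-r}{r+s}\bigr)$ converts the extremal PDE \eqref{finvar} into \eqref{diff} with $d=2$, i.e.\ the Legendre equation. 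One caution: it is the ratio $\tfrac{|Y|-|X|}{|X|+|Y|}$, not $\tfrac{|X|^2-|Y|^2}{|X|^2+|Y|^2}$, that yields the Legendre operator; the squared ratio gives a different ODE, so here ``such as'' must become an honest computation.

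For parts (i) and (ii) your radial reduction, as written, contains a genuine gap. Write $dX_t=H_t\,dW_t$, $dY_t=K_t\,dW_t$. If $Y$ is conformal then $K_t^TK_t=\lambda_tI$, hence $K_tK_t^T=\lambda_tI$, and $H_t^TH_t\geq\lambda_tI$ forces $H_tH_t^T\geq\lambda_tI$ (same spectrum for square matrices); consequently the quadratic variation of the martingale part of $|X|$ dominates $\lambda_t\,dt$, which equals that of $|Y|$, so the one-dimensional subordination $|Y|\ll|X|$ does survive, and symmetrically in (ii). What does \emph{not} survive is the drift bound \eqref{assumpt} on the non-conformal side. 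For a general $\R^2$-valued martingale $X$, the finite variation part $A$ of $|X|$ obeys $|X_t|\,dA_t=\tfrac12\left(\mathrm{tr}(H_tH_t^T)-|X_t|^{-2}X_t^TH_tH_t^TX_t\right)dt$ while $d[|X|,|X|]_t=|X_t|^{-2}X_t^TH_tH_t^TX_t\,dt$, so the first inequality in \eqref{assumpt} (with $d=2$) asks $\mathrm{tr}(H_tH_t^T)\geq 2|X_t|^{-2}X_t^TH_tH_t^TX_t$, which fails whenever $X_t$ is aligned with the larger singular direction of $H_t$ --- in the extreme, whenever $X$ moves purely radially. The dual failure occurs in (ii). Hence Theorem~\ref{mainth} and its proof do not deliver (i), (ii); the Bellman inequality needed there must see the full matrix $[X,X]$ of the non-conformal process rather than only its radial projection and its trace, the closing ODE is genuinely Laguerre's, and the constants are genuinely different from (iii). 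The phrase ``clock of the $|X|$-side'' glosses over this: when $X$ is not conformal, $|X|$ is not a time change of a single Bessel process and has no single clock.
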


The proof of this theorem, presented in \cite{BJV1} and \cite{BJV2}, is analytic and exploits the Bellman function approach as described in \cite{NazTre1}, \cite{NazTreVol} and \cite{VasVol}.  
The purpose of this paper is to present a significant improvement of the third inequality \eqref{BJVin} which is the main result in \cite{BJV2}. Not only shall we determine the optimal constant in \eqref{BJVin} for the full range $0<p<\infty$, but we will also provide a sharp generalization of this estimate to the $d$-dimensional setting. Since the conformal  two-dimensional martingale treated in \cite{BJV2} are just time-changed $\R^2$-valued Brownian motion, its norm is a time-changed Bessel process in dimension two. This simple observation suggests to study related estimates for stopped Bessel processes. This approach will enable us to investigate the case when the dimension of the Bessel process is an arbitrary number in the interval $(1,\infty)$ and not just an integer. 
 We shall in fact consider an even more general setting. Let $X$, $Y$ be two nonnegative, continuous-path submartingales and let
\begin{equation}\label{DoobMeyer}
X=X_0+M+A,\qquad Y=Y_0+N+B
\end{equation}
be their Doob-Meyer decomposition (see \cite{RY}), uniquely determined by $M_0=A_0=N_0=B_0=0$ and the further condition that $A$, $B$ are predictable. Consider the following property of the finite variation parts of $X$ and $Y$: for a fixed $d>1$ and all  $t> 0$, 
\begin{equation}\label{assumpt}
 X_t\mbox{d}A_t\geq \frac{d-1}{2}\mbox{d}[X,X]_t,\qquad Y_t\mbox{d}B_t\leq \frac{d-1}{2}\mbox{d}[Y,Y]_t.
\end{equation}
For example, if $\overline{X}$, $\overline{Y}$ are  conformal martingales in $\R^d$, then $|\overline{X}|$, $|\overline{Y}|$ are submartingales and by the It\^o formula, their martingale and finite variation parts are
\begin{equation*}
\begin{split}
 M_t&=\sum_{j=1}^d\int_{0+}^t \frac{\overline{X}^j_s}{|\overline{X}_s|}\mbox{d}\overline{X}_s^j,\qquad  A_t=\frac{d-1}{2}\int_{0+}^t \frac{1}{|\overline{X}_s|}\mbox{d}[\overline{X}^1,\overline{X}^1]_s,\\
 N_t&=\sum_{j=1}^d\int_{0+}^t \frac{\overline{Y}^j_s}{|\overline{Y}_s|}\mbox{d}\overline{Y}_s^j,\qquad  B_t=\frac{d-1}{2}\int_{0+}^t \frac{1}{|\overline{Y}_s|}\mbox{d}[\overline{Y}^1,\overline{Y}^1]_s,\qquad t\geq 0. 
\end{split}
\end{equation*}
Hence \eqref{assumpt} is satisfied and in fact, both inequalities become equalities in this case. As another example, if $R$, $S$ are adapted $d$-dimensional Bessel processes and $\tau$ is a stopping time, then $X=(R_{\tau\wedge t})_{t\geq 0}$, $Y=(S_{\tau\wedge t})_{t\geq 0}$ enjoy the property \eqref{assumpt}.

We now turn to a precise statement of our main result. For a given $0<p<\infty$ and $d>1$ such that $p+d>2$, let $z_0=z_0(p,d)$ be the smallest root in $[-1, 1)$ of the solution to \eqref{diff} (see \S2 below) and let
\begin{equation}\label{defcp}
C_{p,d}=\begin{cases}
\frac{1+z_0}{1-z_0}, & \mbox{if }(2-d)_+<p\leq 2,\\\\
\frac{1-z_0}{1+z_0}, &  \mbox{if }2<p<\infty.
\end{cases}
\end{equation}
\begin{theorem}\label{mainth}
Let $X$, $Y$ be two nonnegative submartingales satisfying \eqref{assumpt} and such that $Y$ is differentially subordinate to $X$. Then for $(2-d)_+ <p<\infty$ we have
\begin{equation}\label{mainin}
 ||Y||_p\leq C_{p,d} ||X||_p
\end{equation}
and the constant $C_{p,d}$ is the best possible. If $0<p\leq (2-d)_+$, then the moment inequality does not hold with any finite $C_{p,d}$.
\end{theorem}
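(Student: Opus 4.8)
The plan is to run Burkholder's method: reduce the norm inequality to an integrated moment estimate, produce a suitable special function, and obtain both the sharpness and the failure for small $p$ from explicit Bessel-type examples. It is enough to prove $\E Y_t^p\le C_{p,d}^p\,\E X_t^p$ for each fixed $t\ge 0$ (the case $||X||_p=\infty$ being trivial), and by scaling we may normalize. Writing $C=C_{p,d}$, I look for $U=U_{p,d}\colon[0,\infty)^2\to\R$ with: (i) \emph{majorization}, $U(x,y)\ge y^p-C^px^p$; (ii) \emph{initial inequality}, $U(x,y)\le 0$ for $0\le y\le x$; and (iii) \emph{structural inequality}, $U_x\le 0$ and $U_y\ge 0$ on $(0,\infty)^2$, and for all $x,y>0$ and all reals $a\ge b\ge 0$, $c$ with $c^2\le ab$,
\begin{equation*}
\bigl(\tfrac12 U_{xx}+\tfrac{d-1}{2x}U_x\bigr)a+U_{xy}\,c+\bigl(\tfrac12 U_{yy}+\tfrac{d-1}{2y}U_y\bigr)b\le 0 .
\end{equation*}
Granting such a $U$ (it will be $C^1$ and piecewise $C^2$, so the It\^o--Tanaka formula applies after a standard mollification), write $X=X_0+M+A$, $Y=Y_0+N+B$ and expand $U(X_t,Y_t)$. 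The bounds in \eqref{assumpt}, together with $U_x\le 0$ (whence $U_x\,dA\le\frac{d-1}{2X}U_x\,d[X,X]$, using $dA-\frac{d-1}{2X}d[X,X]\ge 0$) and $U_y\ge 0$ (whence $U_y\,dB\le\frac{d-1}{2Y}U_y\,d[Y,Y]$), the differential subordination $d[X,X]\ge d[Y,Y]\ge 0$, and the Kunita--Watanabe inequality $|d[X,Y]|\le(d[X,X])^{1/2}(d[Y,Y])^{1/2}$, show via (iii) that the finite-variation part of $U(X_t,Y_t)$ is nonincreasing, so $U(X_t,Y_t)$ is a supermartingale. As differential subordination forces $Y_0\le X_0$, (ii) gives $\E U(X_0,Y_0)\le 0$, and then (i) yields $\E Y_t^p-C^p\,\E X_t^p\le\E U(X_t,Y_t)\le\E U(X_0,Y_0)\le 0$. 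The assumption $p+d>2$ secures the local integrability needed to justify this It\^o expansion as $X$, $Y$ approach the axes.

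Since the inequality is invariant under $(X,Y)\mapsto(\lambda X,\lambda Y)$, I would take $U$ homogeneous of degree $p$ and pass to the ``radial'' coordinates adapted to the two Bessel generators appearing in (iii); the angular profile of $U$ then satisfies exactly the ordinary differential equation \eqref{diff}. The function $U$ is built piecewise: near the diagonal $\{y=x\}$ it coincides with an explicit modification of the obstacle $y^p-C^px^p$, while on the complementary region it solves \eqref{diff} (equality in (iii)); the pieces are matched by a $C^1$ smooth-fit condition, and that condition is exactly the requirement that the relevant solution of \eqref{diff} vanish at the boundary value — which is why $z_0=z_0(p,d)$ is the smallest root in $[-1,1)$ and why \eqref{defcp} splits according to whether $p\le 2$ or $p>2$ (i.e.\ according to which of $X$, $Y$ is ``dominant''; at $p=2$ one gets $z_0=0$ and $C_{2,d}=1$, matching the elementary bound $\E Y_t^2\le\E X_t^2$ that follows at once from \eqref{assumpt}). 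Existence of such a root — equivalently, sufficient oscillation of \eqref{diff} in the sense of Sturm comparison — holds precisely when $p+d>2$, which is where the range of $p$ comes from. Property (iii) then reduces, through \eqref{diff}, to a one-dimensional convexity/monotonicity check, and (i), (ii) are verified by comparison with the obstacle along rays.

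For sharpness I reverse the above chain of inequalities along an extremal path: take $X$, $Y$ to be $d$-dimensional Bessel processes (equality in \eqref{assumpt}), suitably stopped and started so that $(X_t,Y_t)$ remains in the region where $U(x,y)=y^p-C^px^p$ and along which $U(X_t,Y_t)$ is a genuine martingale; letting the starting configuration tend to the boundary of this region forces $||Y||_p/||X||_p\to C_{p,d}$. When $d$ is an integer these are the moduli of conformal martingales in $\R^d$, recovering and improving the example of \cite{BJV2}. Finally, for $0<p\le(2-d)_+$ (hence $1<d<2$) one exhibits a limiting family of admissible Bessel pairs along which $||Y||_p/||X||_p\to\infty$; this matches the fact that below the threshold \eqref{diff} has no root in $[-1,1)$, the special function ceases to exist, and the Bellman value of the problem is infinite.

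The heart of the matter is the construction and verification of $U$: choosing the coordinates so that (iii) collapses onto \eqref{diff}, locating the free boundary via the $C^1$ smooth fit (which pins down $z_0$, hence $C_{p,d}$), and then checking the majorization (i) and the sign/structural conditions \emph{globally} on the quadrant — not merely where \eqref{diff} was imposed — together with the delicate behavior near the axes needed to legitimize It\^o's formula. Producing the extremal Bessel processes (for sharpness) and the divergent family (for the failure when $p\le(2-d)_+$) is a second, essentially independent difficulty; both rest on the same dichotomy at $p+d=2$.
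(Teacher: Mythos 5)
Your proposal follows essentially the same Burkholder-method blueprint as the paper: a degree-$p$ homogeneous special function $U$ on the quadrant satisfying majorization, an initial-value inequality and a structural (quadratic-form) inequality; reduction via homogeneity to the ODE \eqref{diff}; a free boundary located by $C^1$ smooth fit at the root $z_0$ of $g_{p,d}$; the Kunita--Watanabe and differential-subordination estimates absorbing the cross term; sharpness via stopped Bessel processes driven by $\pm$ the same Brownian motion; and failure below $p+d=2$ tied to $g_{p,d}$ having no root in $[-1,1)$. Your reduction of the $J_2$-type drift terms through $U_x\le 0$, $U_y\ge 0$ with \eqref{assumpt}, and the observation $\E Y_t^2\le\E X_t^2$ at $p=2$, are both exactly as in the paper.

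Two concrete points in your sketch are off and worth repairing. First, the geometry of the free-boundary decomposition is reversed for $(2-d)_+<p\le 2$. In the paper the sector \emph{containing the diagonal}, $\{y\le\frac{1+z_0}{1-z_0}x\}$, is where $U=c(x+y)^pg_{p,d}(\tfrac{y-x}{x+y})$ solves \eqref{diff} and where $U(X_t,Y_t)$ is a local martingale; the obstacle $V=y^p-C^px^p$ is matched only in the complementary sector $\{y>\frac{1+z_0}{1-z_0}x\}$. (Only when $p>2$, where $z_0<0$, does the contact region contain the diagonal.) The extremal Bessel pair therefore runs inside the ODE region and is stopped on hitting the line $y=C_{p,d}x$; it does not ``remain in the region where $U=y^p-C^px^p$.'' Your picture cannot produce a valid $U$, because $V$ itself violates the structural inequality near the diagonal (indeed $L+R$ for $V$ has the sign of $y^{p-2}-C^px^{p-2}$, which is wrong there), so one must match at a face where $V$ has become admissible. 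Second, for $(2-d)_+<p<1$ the smooth-fit point is not the root $z_0$ but a value $z_1\in(s_1,z_0]$, defined as the point where $cg$ and $v=(\tfrac{1+s}{2})^p-C^p(\tfrac{1-s}{2})^p$ are tangent for the largest constant $c$ with $cg\ge v$ on $[-1,z_0]$; the claim that the fit always occurs where $g$ vanishes is correct only for $1\le p\le 2$. Both defects live entirely inside your framework --- the paper devotes separate lemmas (Lemmas 2.5, 2.6 and 3.1) to precisely these verifications --- but as stated the sketch would lead you to construct the wrong $U$ for $p\le 2$.
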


As an application, we have the following bound for conformal martingales and Bessel processes.  The first result extends the Borichev--Janakiraman--Volberg result (iii) in Theorem \ref{BJV} (see Remark \ref{constant} below).

\begin{cor}\label{conf-cor}
Assume that $X$, $Y$ are conformal martingales in $\R^d$, $d\geq 2$, such that $Y$ is differentially subordinate to $X$. Then for any $0<p<\infty$,
\begin{equation}\label{orthogin}
 ||Y||_p\leq C_{p,d}||X||_p
\end{equation}
and the constant $C_{p,d}$ is the best possible. 
\end{cor}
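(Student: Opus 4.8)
The plan is to deduce Corollary~\ref{conf-cor} from Theorem~\ref{mainth} by the passage from conformal martingales to their radial parts, exactly as foreshadowed in the paragraph preceding \eqref{assumpt}. First I would set $X=|\overline{X}|$ and $Y=|\overline{Y}|$, where $\overline{X}$, $\overline{Y}$ are the given conformal martingales in $\R^d$. Since the modulus of an $\R^d$-valued continuous martingale is a nonnegative submartingale, the Doob--Meyer decompositions \eqref{DoobMeyer} exist, and the It\^o formula applied to the (smooth away from $0$) function $x\mapsto |x|$ gives precisely the martingale and finite-variation parts displayed in the excerpt. Using the conformality relations $[\overline{X}^i,\overline{X}^i]=[\overline{X}^j,\overline{X}^j]$ and $[\overline{X}^i,\overline{X}^j]\equiv 0$ for $i\neq j$, one computes $X_t\mathrm{d}A_t=\tfrac{d-1}{2}\mathrm{d}[\overline{X}^1,\overline{X}^1]_t$ and $[X,X]_t=\sum_j [\overline{X}^j,\overline{X}^j]_t = d\,[\overline{X}^1,\overline{X}^1]_t/d \cdot d$; more carefully, $\mathrm{d}[X,X]_t = \big(\sum_j (\overline{X}^j_t)^2/|\overline{X}_t|^2\big)\mathrm{d}[\overline{X}^1,\overline{X}^1]_t = \mathrm{d}[\overline{X}^1,\overline{X}^1]_t$, so that the first relation in \eqref{assumpt} holds with equality, and identically for $Y$. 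This is the content of the remark already made in the excerpt, so this step is essentially bookkeeping with the It\^o formula.

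Next I would check the differential subordination of $Y=|\overline{Y}|$ to $X=|\overline{X}|$. Here the point is that, since $[X,X]_t=[\overline{X}^1,\overline{X}^1]_t$ and likewise $[Y,Y]_t=[\overline{Y}^1,\overline{Y}^1]_t$, and since $[\overline{X},\overline{X}]_t=d\,[\overline{X}^1,\overline{X}^1]_t$, $[\overline{Y},\overline{Y}]_t=d\,[\overline{Y}^1,\overline{Y}^1]_t$, the hypothesis that $\overline{Y}$ is differentially subordinate to $\overline{X}$ (i.e.\ $[\overline{X},\overline{X}]_t-[\overline{Y},\overline{Y}]_t$ is nondecreasing and nonnegative) is equivalent to the same statement for $[X,X]_t-[Y,Y]_t$. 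Hence $Y$ is differentially subordinate to $X$ in the scalar submartingale sense. We also have $p+d>2$ for every $0<p<\infty$ since $d\geq 2$, so the range hypothesis $(2-d)_+<p<\infty$ of Theorem~\ref{mainth} is automatic. Applying Theorem~\ref{mainth} yields $\||\overline{Y}|\|_p\leq C_{p,d}\||\overline{X}|\|_p$, which, since $\|\overline{X}_t\|_p = \||\overline{X}_t|\|_p$ for $\R^d$-valued random variables, is exactly \eqref{orthogin}.

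It remains to argue sharpness of $C_{p,d}$ in the conformal setting. The inequality \eqref{orthogin} cannot have a smaller constant than \eqref{mainin} restricted to conformal examples, so I would exhibit a family of conformal martingales whose radial parts realize (in the limit) the extremal submartingale pairs used to prove optimality in Theorem~\ref{mainth}. The natural candidates are time-changed $\R^d$-valued Brownian motions: if $B$ is a $d$-dimensional Brownian motion then any $\overline{X}$ obtained by running $B$ with an adapted time-change (and $\overline{Y}$ obtained from a further time-change subordinate to it) is conformal, and $|\overline{X}|$ is then a time-changed $d$-dimensional Bessel process. Since the extremal configurations for Theorem~\ref{mainth} can be taken to be stopped Bessel processes (the second example listed before the statement of Theorem~\ref{mainth}), lifting them back to Brownian motion produces conformal martingales attaining the constant in the limit. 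The main obstacle is precisely this sharpness half: one must ensure that the extremizers constructed in the proof of Theorem~\ref{mainth}—which a priori are abstract submartingales satisfying \eqref{assumpt}—can be represented by, or approximated by, the radial parts of genuine conformal martingales, and that the differential subordination is preserved under the lift. I expect this to be handled by the standard device of realizing the pair $(|\overline{X}|,|\overline{Y}|)$ via independent Brownian angular parts attached to the prescribed Bessel radial dynamics, so that the reconstructed $\overline{X}$, $\overline{Y}$ are conformal and the norms are unchanged; the details are routine once the correspondence in the first two paragraphs is in place.
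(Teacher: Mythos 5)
Your proposal is correct and follows essentially the same route as the paper: reduce to Theorem~\ref{mainth} by passing to the moduli $X=|\overline{X}|$, $Y=|\overline{Y}|$, verify via It\^o that the Doob--Meyer decomposition has the form displayed before \eqref{assumpt} (so \eqref{assumpt} holds with equality), observe that $\mathrm{d}[X,X]_t=\mathrm{d}[\overline{X}^1,\overline{X}^1]_t=\tfrac{1}{d}\,\mathrm{d}[\overline{X},\overline{X}]_t$ and likewise for $Y$ so that scalar differential subordination is inherited from the vector-valued one, and note that $p+d>2$ automatically when $d\geq 2$. For sharpness the paper simply asserts that it suffices to prove sharpness of \eqref{besselin}; your remark that this requires lifting the extremal stopped Bessel pairs back to conformal martingales via a skew-product (radial Bessel times independent angular Brownian part), with differential subordination preserved because both lifts have quadratic variation $d\cdot(t\wedge\tau)$, is exactly the implicit step the paper leaves unspoken, and you are right that it is routine. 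One small point of sloppiness: your first display ``$[X,X]_t=\sum_j [\overline{X}^j,\overline{X}^j]_t=d\,[\overline{X}^1,\overline{X}^1]_t/d\cdot d$'' equates the scalar bracket $[X,X]$ with the vector bracket $[\overline{X},\overline{X}]$, which is off by the factor $d$; your subsequent ``more carefully'' line gives the correct identity $\mathrm{d}[X,X]_t=\mathrm{d}[\overline{X}^1,\overline{X}^1]_t$, and this is what is actually used, so the conclusion stands.
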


\begin{cor}
Assume that $R$, $S$ are $d$-dimensional Bessel processes, $d>1$, driven by the same Brownian motion and satisfying \eqref{assumpt}. Then for any $(2-d)_+ <p<\infty$  and any stopping time $\tau\in L^{p/2}$, we have
\begin{equation}\label{besselin}
 ||S_{\tau}||_p\leq C_{p,d}||R_\tau||_p
\end{equation}
and the constant $C_{p,d}$ is the best possible. If $0<p\leq (2-d)_+$, then the moment inequality does not hold with any finite $C_{p,d}$.
\end{cor}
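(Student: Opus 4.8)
The plan is to obtain \eqref{besselin} as an essentially immediate consequence of Theorem \ref{mainth}, applied to the stopped processes $X=(R_{\tau\wedge t})_{t\ge 0}$ and $Y=(S_{\tau\wedge t})_{t\ge 0}$. Recall that a $d$-dimensional Bessel process $R$ satisfies $\mbox{d}R_t=\mbox{d}W_t+\frac{d-1}{2R_t}\mbox{d}t$ for a one-dimensional Brownian motion $W$, so that $[R,R]_t=t$ and, as already noted in the paragraph preceding the statement, the pair $(X,Y)$ satisfies \eqref{assumpt} (in fact with equalities). Moreover $[X,X]_t=\tau\wedge t=[Y,Y]_t$, hence $[X,X]-[Y,Y]$ is identically zero, so it is nonnegative and nondecreasing and $Y$ is differentially subordinate to $X$. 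Theorem \ref{mainth} then gives, for every $(2-d)_+<p<\infty$,
$$\sup_{t\ge 0}||S_{\tau\wedge t}||_p=||Y||_p\le C_{p,d}\,||X||_p=C_{p,d}\sup_{t\ge 0}||R_{\tau\wedge t}||_p.$$

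Next I would pass from the bounded stopping times $\tau\wedge t$ to $\tau$ itself. Since $\tau\in L^{p/2}$, $\tau$ is finite a.s., so $S_{\tau\wedge t}\to S_\tau$ a.s.\ as $t\to\infty$, and Fatou's lemma yields $||S_\tau||_p\le\liminf_t||S_{\tau\wedge t}||_p\le||Y||_p$. For the right-hand side I would prove the matching identity $\sup_t||R_{\tau\wedge t}||_p=||R_\tau||_p$. On the one hand, for $p>(2-d)_+$ It\^o's formula shows that $R^p$ is a genuine submartingale: its drift equals $\frac p2(p+d-2)R^{p-2}\ge 0$, and the moments of $R$ entering the argument are finite precisely because $p>2-d$; hence $t\mapsto\E R_{\tau\wedge t}^p$ is nondecreasing. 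On the other hand, applying the Burkholder--Davis--Gundy inequality (valid for every exponent $p>0$) to the continuous local martingale $N_t=2\int_0^t R_s\,\mbox{d}W_s$ in the decomposition $R_t^2=R_0^2+N_t+dt$, and using $[N,N]_\tau=4\int_0^\tau R_s^2\,\mbox{d}s\le 4\tau\sup_{s\le\tau}R_s^2$, H\"older's inequality and a routine localization, one bounds $\E\,(\sup_{s\le\tau}R_s)^p$ in terms of $\E R_0^p$ and $\E\tau^{p/2}$ alone; dominated convergence then upgrades $R_{\tau\wedge t}\to R_\tau$ to $L^p$-convergence, so $\sup_t||R_{\tau\wedge t}||_p=\lim_t||R_{\tau\wedge t}||_p=||R_\tau||_p$. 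Combining the two displays gives $||S_\tau||_p\le C_{p,d}||R_\tau||_p$.

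For the sharpness of $C_{p,d}$, and for the assertion that no finite constant works when $0<p\le(2-d)_+$, I would note that the extremal (respectively, ratio-exploding) configurations constructed in the proof of the sharpness part of Theorem \ref{mainth} are themselves realized by stopped Bessel processes driven by a common Brownian motion, hence are legitimate test pairs in \eqref{besselin}. For the negative statement in the range $1<d<2$, $0<p<2-d$, one can be explicit: take $R$ and $S$ with $R_0<S_0$ driven by the same $W$ and let $\tau$ be the first time $R$ hits $0$; the law of $\tau$ is of inverse-gamma type with moments of order $q$ finite exactly for $q<(2-d)/2$, so $\tau\in L^{p/2}$ precisely when $p<2-d$, while $R_\tau=0$ but $S_\tau>0$ a.s.\ (because $\int_0^\tau R_s^{-1}\,\mbox{d}s=\tfrac{2}{d-1}(-R_0-W_\tau)<\infty$), forcing $||S_\tau||_p/||R_\tau||_p=\infty$. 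The endpoint $p=(2-d)_+$ is obtained by letting such configurations degenerate, exactly as in Theorem \ref{mainth}.

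The only genuine work beyond invoking Theorem \ref{mainth} is the identity $\sup_t||R_{\tau\wedge t}||_p=||R_\tau||_p$, and this is precisely where the two hypotheses of the corollary enter: $p+d>2$ is what makes $R^p$ a true submartingale, and $\tau\in L^{p/2}$ (rather than mere a.s.\ finiteness of $\tau$) is what makes $\sup_{s\le\tau}R_s$ belong to $L^p$ through the Burkholder--Davis--Gundy inequality. I do not anticipate any deeper obstacle, since the hard analysis—the choice of the special function and the verification of its properties—is carried out once and for all in the proof of Theorem \ref{mainth}.
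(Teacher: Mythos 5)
Your plan reproduces the paper's route: apply Theorem \ref{mainth} to the stopped pair $X=R_{\tau\wedge\cdot}$, $Y=S_{\tau\wedge\cdot}$, check that \eqref{assumpt} and differential subordination hold (here with equality in both), then pass from $\tau\wedge t$ to $\tau$, and carry over the sharpness construction, which is performed directly with stopped Bessel processes in the paper's sharpness subsection. The points where you diverge are small but genuine. For the limit passage you essentially rederive the upper half of DeBlassie's inequality \eqref{BDG} via Burkholder--Davis--Gundy applied to $R^2$; the paper already has \eqref{BDG} and Pedersen's \eqref{Doob} in \S2 and implicitly relies on them, so your argument is a self-contained substitute and is correct, at the cost of having to handle the hitting of $0$ for $1<d<2$ in It\^o's formula (which \eqref{BDG} packages away). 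For the assertion that no finite constant works when $0<p\le(2-d)_+$, the paper pushes $a\uparrow z_0=1$ in the identity \eqref{sharpness}, using that $z_0=1$ exactly when $p+d\le 2$ (Lemma \ref{roots}); your alternative device — start $R$ below $S$, stop at $R$'s hitting time of $0$, and use that this hitting time lies in $L^{p/2}$ iff $p<2-d$ while $R_\tau=0<S_\tau$ — is a clean explicit counterexample, but it only covers the strict range $p<2-d$; for the boundary case $p=2-d$ you fall back on the paper's $\tau^a$-argument as you indicate, so that endpoint is not handled by your own construction. Up to those remarks, the proposal is correct.
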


The paper is organized as follows. In \S2, we introduce a differential equation which is closely associated with these inequalities and study its solutions satisfying certain boundedness property. These solutions are then exploited in \S3 in the construction of special functions, which, by the use of Burkholder's method, yield the assertion of Theorem \ref{mainth}. The final part of the paper is devoted to applications of our results to harmonic functions on Euclidean domains.

\section{A differential equation}
Throughout this section, $0<p<\infty$ and $d>1$ are given and fixed. We emphasize that $d$ need not be an integer.
We start with some preliminary facts and properties of $d$-dimensional Bessel processes. 
Let $B$ be a standard one-dimensional Brownian motion and let $R$, $S$ be two Bessel processes of dimension $d$, satisfying the stochastic differential equations
\begin{equation}\label{bessels}
\begin{split}
\mbox{d}R_t&=\mbox{d}B_t+\frac{d-1}{2}\frac{\mbox{d}t}{R_t},\\
\mbox{d}S_t&=-\mbox{d}B_t+\frac{d-1}{2}\frac{\mbox{d}t}{S_t}\\
\end{split}
\end{equation}
for all $t\geq 0$. As already mentioned in the Introduction, these processes, if stopped appropriately, are the extremals in \eqref{mainin} and hence are strictly related to the structure of our problem.  We refer the reader to \cite{RY} for some of the basic properties of Bessel processes including their stochastic differential equation representation given above. 

Let us recall some basic inequalities, which will be needed in our subsequent considerations. Assume that $R$ starts from $x\geq 0$. The Burkholder--Gundy inequalities for Bessel processes proved by DeBlassie \cite{Deb} states that there are constants $c_{p,d}$, $c_{p,d}'$, depending only on the parameters indicated, such that 
\begin{equation}\label{BDG}
 c_{p,d}||(x^2+\tau)^{1/2}||_p\leq ||R_{\tau}^*||_p\leq c_{p,d}'||(x^2+\tau)^{1/2}||_p,
\end{equation}
for any stopping time $\tau$. Here, as usual, $R^*$ denotes the maximal process of $R$, given by $R^*_t=\sup_{0\leq s\leq t}R_s$. Another important result is Doob's maximal inequality for Bessel processes. This states that  if $p+d>2$, then there is $c_{p,d}''$ depending only on $p$ and $d$ such that
\begin{equation}\label{Doob}
 ||R_{\tau}^*||_p\leq c_{p,d}''||R_{\tau}||_p
\end{equation}
for all stopping times $\tau$ which are $p/2$-integrable. We refer to Pedersen \cite{Ped} where this inequality is obtained with the best constant. 

Let us turn to the differential equation which plays a fundamental role in the paper:
\begin{equation}\label{diff}
(1-s^2)g''(s)-2(d-1)sg'(s)+p(d-1)g(s)=0.
\end{equation}
We shall prove now that there is a continuous function $g=g_{p,d}:[-1,1)\to \R$ with $g(-1)=-1$, satisfying \eqref{diff} for $s\in (-1,1)$ and hence bounded on any compact subinterval of $[-1,1)$. Consider the class of power series of the form
\begin{equation}\label{defg}
 g(s)=\sum_{n=0}^\infty a_n(1+s)^n,
\end{equation}
with $a_0=-1$. 
Plugging this into \eqref{diff} and comparing the coefficients of $(1+s)^n$, we obtain
\begin{equation}\label{system}
\begin{split}
a_{n+1}=-\prod_{k=0}^n \frac{k(k-1)+2(d-1)k-p(d-1)}{2(k+1)(k+d-1)}, \qquad \mbox{for }n\geq 0.
\end{split}
\end{equation}
It is easy to see that $\lim_{n\to\infty}|a_n|^{1/n}=1/2$, so the radius of convergence of the series for $g$ is indeed $2$ and hence \eqref{defg} gives the function we are looking for. Throughout the paper, $z_0=z_0(p,d)$ denotes the smallest root of the solution $g_{p,d}$  (if $g_{p,d}$ has no zeros, put $z_0=1$).

The differential equation \eqref{diff} arises as follows. For $x>0$ and $y\geq 0$, let
\begin{equation}\label{defW}
W(x,y)=(x+y)^pg\left(\frac{y-x}{x+y}\right).
\end{equation}
We have that $W$ is of class $C^\infty$ on $(0,\infty)\times (0,\infty)$. In fact, since $g$ is well defined on $(-3,1)$, we see that the partial derivatives of $W$ can be extended to continuous functions on $(0,\infty)\times [0,\infty)$. 
Fix $a\in (-1,1)$ and introduce the stopping time
\begin{equation}\label{deftau}
 \tau^a=\inf\left\{t\geq 0: S_t\geq \frac{1+a}{1-a}R_t\right\}.
\end{equation}

\begin{lemma}\label{optional}
Let $R,\,S$ be Bessel processes as in \eqref{bessels}, starting from $x,\,y>0$, respectively. Then for any $a\in (-1,1)$, the process $(W(R_{\tau^a\wedge t},S_{\tau^a\wedge t}))_{t\geq 0}$ is a martingale.
\end{lemma}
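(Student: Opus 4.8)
The plan is to apply It\^o's formula to the process $W(R_t,S_t)$ and verify that its drift part vanishes \emph{because} $g$ solves \eqref{diff}; the stopped process is then a continuous local martingale, which is promoted to a true martingale by a domination argument on compact time intervals. Throughout write $r=x+y$ and $s=(y-x)/(x+y)$, so that $W(x,y)=r^pg(s)$ and $x=\tfrac{r(1-s)}{2}$, $y=\tfrac{r(1+s)}{2}$, $xy=\tfrac{r^2(1-s^2)}{4}$. From \eqref{bessels} one has $\mbox{d}[R,R]_t=\mbox{d}[S,S]_t=\mbox{d}t$ and $\mbox{d}[R,S]_t=-\mbox{d}t$, so It\^o's formula gives
\begin{equation*}
\mbox{d}W(R_t,S_t)=\big(W_x-W_y\big)(R_t,S_t)\,\mbox{d}B_t+\Lambda(R_t,S_t)\,\mbox{d}t,
\end{equation*}
where $\Lambda=\tfrac12\big(W_{xx}-2W_{xy}+W_{yy}\big)+\tfrac{d-1}{2}\big(W_x/x+W_y/y\big)$.

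Next I would compute $\Lambda$ explicitly. The operator $D=\partial_x-\partial_y$ annihilates $r$ and satisfies $Ds=-2/r$; hence $DW=-2r^{p-1}g'(s)$ and $D^2W=4r^{p-2}g''(s)$, that is, $W_{xx}-2W_{xy}+W_{yy}=4r^{p-2}g''(s)$. Using $\partial_xs=-2y/r^2$, $\partial_ys=2x/r^2$ and the formulas above for $x,y,xy$, one gets $W_x=pr^{p-1}g-2yr^{p-2}g'$ and $W_y=pr^{p-1}g+2xr^{p-2}g'$, whence
\begin{equation*}
\frac{W_x}{x}+\frac{W_y}{y}=pr^{p-1}g\Big(\frac1x+\frac1y\Big)+2r^{p-2}g'\Big(\frac xy-\frac yx\Big)=\frac{4r^{p-2}}{1-s^2}\big(pg(s)-2sg'(s)\big).
\end{equation*}
Combining the two contributions gives $\Lambda(x,y)=\dfrac{2r^{p-2}}{1-s^2}\big[(1-s^2)g''(s)-2(d-1)sg'(s)+p(d-1)g(s)\big]$, which is $0$ by \eqref{diff}. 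Thus $(W(R_t,S_t))_{t\ge0}$ is a continuous local martingale, and so is the stopped process $M^a_t:=W(R_{\tau^a\wedge t},S_{\tau^a\wedge t})$. Two remarks are in order. First, $\mbox{d}(R+S)_t=\tfrac{d-1}{2}(R_t^{-1}+S_t^{-1})\,\mbox{d}t\ge0$, so $R+S$ is non-decreasing, never hits $0$, and in particular $R_t,S_t$ do not vanish simultaneously; together with the definition \eqref{deftau} of $\tau^a$ this forces $R_t>0$ for $t\le\tau^a$, so the stopped process lives in $(0,\infty)\times[0,\infty)$. Second, when $1<d<2$ the coordinate $S$ may reach $0$; but near $s=-1$ the series \eqref{defg} (together with the fact that $1+s=2y/(x+y)$ is smooth for $x>0$) shows $W$ extends to a $C^\infty$ function across the edge $y=0$, so It\^o's formula applies there with no boundary correction.

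Finally I would upgrade $M^a$ to a genuine martingale. By \eqref{deftau}, $S_{\tau^a\wedge t}\le\tfrac{1+a}{1-a}R_{\tau^a\wedge t}$ for every $t$, so the argument $\tfrac{S-R}{S+R}$ of $g$ stays in the compact interval $[-1,a]\subset(-3,1)$; since $g$ is real-analytic, hence bounded, on that interval, say $|g|\le c_a$ there, we obtain
\begin{equation*}
|M^a_t|\le c_a\big(R_{\tau^a\wedge t}+S_{\tau^a\wedge t}\big)^p\le c_a\Big(\frac{2}{1-a}\Big)^p(R^*_t)^p .
\end{equation*}
By the Burkholder--Gundy inequality \eqref{BDG} for Bessel processes, applied with the deterministic ``stopping time'' $t$, we get $\E(R^*_t)^p\le(c'_{p,d})^p(x^2+t)^{p/2}<\infty$. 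Hence on each interval $[0,T]$ the local martingale $M^a$ is dominated by the single integrable random variable $c_a(2/(1-a))^p(R^*_T)^p$, so it is of class D on $[0,T]$ and therefore a true martingale. I expect the main obstacle to be the middle step: carrying the chain rule through the substitution $(x,y)\mapsto(r,s)$ carefully enough that $\Lambda$ collapses \emph{exactly} to a multiple of the left-hand side of \eqref{diff}. The boundary behaviour for $1<d<2$ is a secondary technical nuisance, settled by the analyticity of $g$ at $s=-1$ and the non-decrease of $R+S$.
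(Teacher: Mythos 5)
Your strategy is sound, and it is genuinely different from the paper's. You apply It\^o's formula directly to $W(R_t,S_t)$, read off that the drift collapses to a multiple of the left-hand side of \eqref{diff}, and promote the stopped local martingale to a true martingale by domination via \eqref{BDG}. The paper instead avoids all contact with the singular drift of $S$ at $0$: for $d\ge2$ it argues $0$ is polar, and for $1<d<2$ it passes to the squared processes $(R^2,S^2)$, applies It\^o to the regularized function $\overline{W}(u,v)=W(u^{1/2},(\e+v)^{1/2})$, and lets $\e\downarrow0$ using the identities $W_y(\cdot,0)=W_{xy}(\cdot,0)=0$ to kill the error terms. Your route is shorter but leans on two facts about Bessel processes that your write-up does not supply, and which are precisely what the paper's regularization is designed to sidestep.

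The real gap is the sentence ``$W$ extends to a $C^\infty$ function across the edge $y=0$, so It\^o's formula applies there with no boundary correction.'' Smoothness of $W$ is necessary but not sufficient: what you actually need for $1<d<2$ is that $S$ remains a bona fide continuous semimartingale at $0$, i.e.\ that $S_t=S_0-B_t+\frac{d-1}{2}\int_0^t S_u^{-1}\,\mathrm{d}u$ with the drift integral a.s.\ finite and with \emph{no} singular (local-time) component. That is a nontrivial fact specific to Bessel dimension $>1$ (Revuz--Yor, Ch.~XI), and it, not the smoothness of $W$, is what rules out a boundary correction. Once it is in place, the drift integral is $\int_0^t\Lambda(R_u,S_u)\,\mathrm{d}u$ with $\Lambda$ well defined and equal to zero on $\{y>0\}$; since $\{u:S_u=0\}$ has Lebesgue measure zero, the integral vanishes. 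Note also that your closed form $\Lambda(x,y)=\frac{2r^{p-2}}{1-s^2}\bigl[(1-s^2)g''-2(d-1)sg'+p(d-1)g\bigr]$ is a $0/0$ expression at $y=0$; it is fine as a computation on the interior, but you should not present it as directly showing $\Lambda$ vanishes on the boundary (either invoke continuity of $\Lambda$ up to $y=0$, which follows from $W_y(x,0)=0$, or simply observe that the boundary carries zero time and is irrelevant to the drift integral). With these two points added, your proof is complete; as written, the crucial step for $1<d<2$ is asserted rather than justified.
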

\begin{proof}
Of course, we may assume that $y<\frac{1+a}{1-a}x$, since otherwise $\tau^a\equiv 0$ and the claim is trivial. 
The situation is easy when $d\geq 2$. Since $0$ is polar for $R$ and $S$, we may apply It\^o formula and we check that \eqref{diff} implies that the finite variation part of $(W(R_{\tau^a\wedge t},S_{\tau^a\wedge t}))_{t\geq 0}$ vanishes. The latter amounts to saying that
\begin{equation}\label{finvar}
\frac{d-1}{2x}W_x(x,y)+\frac{d-1}{2y}W_y(x,y)+\frac{1}{2}\left[W_{xx}(x,y)-2W_{xy}(x,y)+W_{yy}(x,y)\right]=0
\end{equation}
for all $x,\,y>0$. 
 For $d<2$, the situation is more complicated, since $S$ reaches $0$ with probability $1$; on the other hand, there are no problems with $R$: $R>0$ almost surely on $[0,\tau^a]$. We shall prove the claim by checking that $ \E W(R_{\sigma},S_{\sigma})=W(x,y)$ 
for any bounded stopping time $\sigma$ such that $\sigma\leq \tau^a$ almost surely. To do this, we use standard approximation procedure and work with the squares of $R$ and $S$, which satisfy the stochastic differential equations
$$ \mbox{d}R_t^2=2R_t\mbox{d}B_t+d\mbox{d}t,\qquad \mbox{d}S_t^2=-2S_t\mbox{d}B_t+d\mbox{d}t \qquad \mbox{for } t\geq 0.$$
Let $N,\,\e$ be positive numbers and put $\eta=\inf\{t\geq 0:R_t+S_t\geq N\}.$ Define $\overline{W}(u,v)=W(u^{1/2},(\e+v)^{1/2})$ for $u,\,v\geq 0$. This function has the necessary smoothness and we may apply It\^o formula to obtain
\begin{equation}\label{ito_formula}
 \E \overline{W}(R^2_{\sigma \wedge \eta},S^2_{\sigma \wedge \eta})= \overline{W}(x^2,y^2)+\E \int_{0+}^{\sigma\wedge \eta} \mathcal{L}\overline{W}(R^2_s,S^2_s)\mbox{d}s,
\end{equation}
where
\begin{equation*}
\begin{split}
 \mathcal{L}&\overline{W}(u,v)\\
&=\overline{W}_x(u,v)d+\overline{W}_y(u,v)d+2u\overline{W}_{xx}(u,v)-4(uv)^{1/2}\overline{W}_{xy}(u,v)+2v\overline{W}_{yy}(u,v)\\
&=\frac{d-1}{2u^{1/2}}W_x(u^{1/2},(\e+v)^{1/2})+\frac{d}{2(\e+v)^{1/2}}W_y(u^{1/2},(\e+v)^{1/2})\\
&\quad +\frac{1}{2}W_{xx}(u^{1/2},(\e+v)^{1/2})-\frac{v^{1/2}}{(\e+v)^{1/2}}W_{xy}(u^{1/2},(\e+v)^{1/2})\\
&\quad +\frac{v}{2(\e+v)}W_{yy}(u^{1/2},(\e+v)^{1/2})-\frac{v}{2(\e+v)^{3/2}}W_y(u^{1/2},(\e+v)^{1/2}).
\end{split}
\end{equation*}
Applying \eqref{finvar} and calculating a little bit, we get
\begin{equation*}
\begin{split}
 \mathcal{L}\overline{W}(u,v)&=\frac{\e}{2(\e+v)}\left(\frac{W_y(u^{1/2},(\e+v)^{1/2})}{(\e+v)^{1/2}}-W_{yy}(u^{1/2},(\e+v)^{1/2})\right)\\
&\quad +\left[1-\left(\frac{v}{\e+v}\right)^{1/2}\right]W_{xy}(u^{1/2},(\e+v)^{1/2}).
\end{split}
\end{equation*}
Now, if $\e\to 0$, then each of the two summands on the right converges to $0$ uniformly on the set $F=\{(u,v):x+y\leq u^{1/2}+v^{1/2}\leq N,\,v^{1/2}\leq \frac{1+a}{1-a}u^{1/2}\}$. This in an immediate consequence of the equalities  $W_y(x,0)=0$ and $W_{xy}(x,0)=0$ valid for all $x>0$. However, the process $((R_{\sigma\wedge \eta\wedge t}^2,S^2_{\sigma\wedge \eta\wedge t}))_{t\geq 0}$ takes values in $F$ if $N$ is sufficiently large; this follows from the bound $y<\frac{1+a}{1-a}x$ (which we have assumed at the beginning of the proof) and the fact that the process $R+S$ is nondecreasing (see \eqref{bessels}).   
Hence, by Lebesgue's dominated convergence theorem, \eqref{ito_formula} yields
$$ \E W(R_{\sigma\wedge \eta},S_{\sigma\wedge \eta})=W(x,y).$$
Now we let $N$ go to $\infty$ and the claim follows, again by Lebesgue's dominated convergence theorem. To see this, note that
$$ |W(R_{\sigma\wedge \eta},S_{\sigma\wedge \eta})|\leq \sup_{[-1,a]}|g|\cdot (R_{\sigma}^*+S_{\sigma}^*)^p$$
and observe that the right-hand side is integrable, in virtue of \eqref{BDG} and the boundedness of $\sigma$.
\end{proof}

\begin{lemma}\label{roots}
We have $z_0<1$ if and only if $p+d>2$.
\end{lemma}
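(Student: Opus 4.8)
The plan is to analyze the behavior of the power-series solution $g = g_{p,d}$ near $s = 1$ and combine this with a probabilistic argument using Lemma~\ref{optional}. The key dichotomy comes from the recursion \eqref{system}: the numerator $k(k-1) + 2(d-1)k - p(d-1) = (k+d-1)(k+d-2) - (d-1)(d-2) - p(d-1) = (k+d-1)(k+d-2) - (d-1)(p+d-2)$ changes sign according to whether $p+d>2$ or $p+d\le 2$. More precisely, when $p+d > 2$, the factor is negative for small $k$ and eventually positive, so the signs of the $a_n$ do not stabilize in a way that forces $g$ to be one-signed; when $p+d \le 2$, the numerator is nonnegative for all $k \ge 0$ (since already the $k=0$ term $-(d-1)(p+d-2) \ge 0$), so by \eqref{system} all $a_{n+1}$ are nonpositive, hence $g(s) = \sum a_n (1+s)^n \le a_0 = -1 < 0$ for $s \in [-1,1)$ and $g$ has no root, i.e. $z_0 = 1$. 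This disposes of the ``only if'' direction.

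For the ``if'' direction, suppose $p+d>2$; I must exhibit a zero of $g$ in $[-1,1)$. The cleanest route is the probabilistic one. Apply Lemma~\ref{optional} with, say, $x = y > 0$ and $a = 0$, so that $\tau^0 = \inf\{t : S_t \ge R_t\}$ and $(W(R_{\tau^0 \wedge t}, S_{\tau^0 \wedge t}))$ is a martingale; its expectation at time $0$ is $W(x,x) = (2x)^p g(0)$. If $g$ were to have \emph{no} root in $[-1,1)$, then by the previous paragraph's sign analysis (or directly, since $g(-1) = -1$ and $g$ is continuous) we would have $g < 0$ throughout $[-1,1)$, hence $W(u,v) = (u+v)^p g\!\big(\tfrac{v-u}{u+v}\big) < 0$ for all $u > 0$, $v \ge 0$ with $v < u$ — the whole region $\{v \le u\}$ in which the stopped process lives up to time $\tau^0$. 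But when $p+d>2$, Doob's maximal inequality for Bessel processes \eqref{Doob} (together with the Burkholder--Gundy bounds \eqref{BDG}) guarantees that $R$ and $S$ can be driven to make $\tau^0$ finite with positive probability and $R_{\tau^0}$ nondegenerate; passing $t \to \infty$ by dominated convergence (the domination $|W(R_{\tau^0\wedge t},S_{\tau^0\wedge t})| \le \sup_{[-1,0]}|g|\cdot (R^*_{\tau^0}+S^*_{\tau^0})^p$ is integrable exactly because $p+d>2$) gives $\E\, W(R_{\tau^0}, S_{\tau^0}) = W(x,x)$. On $\{\tau^0 < \infty\}$ we have $S_{\tau^0} = R_{\tau^0}$, so $W(R_{\tau^0},S_{\tau^0}) = (2R_{\tau^0})^p g(0)$, and the martingale identity forces $g(0) \cdot \E[(2R_{\tau^0})^p \mathbf{1}_{\tau^0<\infty}] + \E[W(R_\infty,S_\infty)\mathbf{1}_{\tau^0=\infty}] = (2x)^p g(0)$; a careful accounting of the contribution of $\{\tau^0=\infty\}$ (where $S \le R$ forever) then yields a genuine sign contradiction with $g<0$ everywhere, unless $g$ vanishes somewhere in $[-1,1)$.

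An alternative, purely analytic execution of the ``if'' direction — which may in fact be what the authors prefer, since it avoids delicate probabilistic bookkeeping — is a Sturm-type comparison. Rewriting \eqref{diff} in self-adjoint form, $\big((1-s^2)^{d-1} g'(s)\big)' + p(d-1)(1-s^2)^{d-2} g(s) = 0$, one sees that as $s \to 1^-$ the ``frequency'' term is positive and the weight $(1-s^2)^{d-2}$ is integrable against $(1-s^2)^{-(d-1)}$ in a way that prevents $g$ from staying negative and monotone all the way to $1$: if $g < 0$ on $[-1,1)$ then $(1-s^2)^{d-1} g'$ is strictly increasing, so $g'$ is eventually positive and $g$ is eventually increasing toward some limit $\ell \le 0$; feeding $g \to \ell$ back into the equation and integrating shows $\ell$ cannot be strictly negative (the forcing term would push $g$ across $0$), while $\ell = 0$ combined with $g<0$ and the asymptotics of solutions near the regular singular point $s=1$ is incompatible with $p+d>2$ — here one uses that the indicial exponents at $s=1$ are $0$ and $2-d$, and $p+d>2$ is precisely the condition ensuring the ``bad'' behavior is genuinely present. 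I expect this near-$s=1$ asymptotic analysis (equivalently, the integrable-domination step in the probabilistic proof) to be the main obstacle: everything else is the sign bookkeeping in \eqref{system}, which is elementary, but pinning down that $g$ \emph{must} change sign — rather than merely \emph{may} — requires genuinely using the strict inequality $p+d>2$ and not just nonnegativity of the recursion's numerator.
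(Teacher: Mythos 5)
The proposal does not match the paper's proof, and it contains several genuine gaps. The most serious is in your ``only if'' direction. You claim that for $p+d\le 2$ the numerator in \eqref{system} is nonnegative at every $k\ge 0$ because ``the $k=0$ term $-(d-1)(p+d-2)\ge 0$''. That is a miscomputation: your own rewriting gives numerator $=(k+d-1)(k+d-2)-(d-1)(p+d-2)$, and at $k=0$ this equals $(d-1)(d-2)-(d-1)(p+d-2)=-p(d-1)<0$. You dropped the $(d-1)(d-2)$ piece. As a consequence $a_1=p/2>0$ (consistent with the paper's remark $\lim_{s\downarrow -1}g'(s)=a_1=p/2$), and more generally for $p+d\le 2$ one has $N(0)<0$ while $N(k)>0$ for all $k\ge 1$, so \emph{all} $a_n$ with $n\ge 1$ are strictly positive, the opposite of what you assert. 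In particular $g$ is strictly increasing on $[-1,1)$ and begins at $-1$; whether it reaches $0$ before $s=1$ is precisely $\sum_{n\ge 1}a_n 2^n\le 1$ versus $>1$, a boundary-case estimate that the recursion's signs do not settle. The paper avoids this entirely: for $p+d=2$ it writes down the closed form $g_{p,d}(s)=-\bigl(\tfrac{1-s}{2}\bigr)^p$ (which is negative on $[-1,1)$), and for $p+d<2$ it argues probabilistically that if $a$ were the smallest root then $W(x,y)=\E W(R_t,S_t)\mathbf{1}_{\{\tau^a>t\}}\to 0$, forcing $g$ to vanish before $a$, a contradiction.

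Your ``if'' direction also does not work as written. With $x=y$ and $a=0$ the stopping time $\tau^0=\inf\{t\ge 0:S_t\ge R_t\}$ is identically $0$, because $S_0=y=x=R_0$; as the paper notes in the proof of Lemma~\ref{optional}, the martingale identity is vacuous in this case (one must take $y<\tfrac{1+a}{1-a}x$). Even setting the starting points correctly, the ``careful accounting of the contribution of $\{\tau^0=\infty\}$'' that is supposed to yield a sign contradiction is never carried out, and by itself the identity you write does not produce one (if $g<0$ everywhere then every term appearing has the same sign). The integrability needed for dominated convergence is asserted but not proved; in the paper it is obtained by a genuine bootstrap: first the martingale identity and $\sup_{[-1,a]}g<0$ give the uniform bound \eqref{fyl} on $\E R^p_{\tau^a\wedge t}$, which via DeBlassie's Burkholder--Gundy bound \eqref{BDG} and Pedersen's Doob inequality \eqref{Doob} yields $\tau^a\in L^{p/2}$, and only then does one get the integrability. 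The actual engine of the paper's contradiction is quantitative and is missing from your proposal: one has the a priori bound $\|S_\tau\|_p\le c_3\|R_\tau\|_p$ for all $p/2$-integrable $\tau$ (eq.~\eqref{dep}), while the definition of $\tau^a$ gives the exact ratio $\|S_{\tau^a}\|_p=\tfrac{1+a}{1-a}\|R_{\tau^a}\|_p$; letting $a\uparrow 1$ makes $\tfrac{1+a}{1-a}>c_3$, forcing $g$ to have a root. Your alternative Sturm/indicial-exponent sketch correctly identifies the exponents $0$ and $2-d$ at $s=1$, but the argument that $g$ ``must cross $0$'' when $p+d>2$ is left at the level of a plan and would require as much work as the probabilistic route it is meant to replace.
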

\begin{proof} 
Let $p+d>2$ and assume that $g$ has no roots smaller than $1$. Let $R$, $S$ be Bessel processes as in \eqref{bessels}, starting from $x$, $y>0$. Suppose that $\tau$ is a stopping time satisfying $\E \tau^{p/2}<\infty$. By \eqref{BDG} and \eqref{Doob}, there are constants $c_1$, $c_2$, $c_3$, depending only on $x$, $y$, such that
\begin{equation}\label{dep}
 ||S_\tau||_p\leq c_1||(y^2+\tau)^{1/2}||_p\leq c_2||R_{\tau}^*||_p\leq c_3||R_\tau||_p.
\end{equation}
Recall the stopping time $\tau^a$ given by \eqref{deftau}. If $y<\frac{1+a}{1-a}x$, then $\tau^a>0$ almost surely and by Lemma \ref{optional},
\begin{equation*}
\begin{split}
W(x,y)&=\E W(R_{\tau^a\wedge t},S_{\tau^a\wedge t})\leq \sup_{[-1,a]}g\cdot\E (R_{\tau^a\wedge t}+S_{\tau^a\wedge t})^p.
\end{split}
\end{equation*}
Since $g$ has no roots in $(-1,1)$, the number $\sup_{[-1,a]}g$ is negative and hence we may write
$$ W(x,y)\leq \sup_{[-1,a]}g \cdot \E R_{\tau^a\wedge t}^p,$$
or, equivalently,
\begin{equation}\label{fyl}
 \E R_{\tau^a\wedge t}^p\leq W(x,y)(\sup_{[-1,a]}g)^{-1}.
\end{equation}
By \eqref{BDG} and \eqref{Doob}, this implies that $\tau^a$ is $p/2$-integrable. Moreover, directly from the definition of $\tau^a$,
\begin{equation}\label{sharpness}
 ||S_{\tau^a}||_p=\frac{1+a}{1-a}||R_{\tau^a}||_p,
\end{equation}
which contradicts \eqref{dep} if $a$ is sufficiently close to $1$. Thus, $g$ must have a root inside the interval $(-1,1)$.

To get the reverse implication, note first that if $p+d=2$, then $g_{p,d}(s)=\left(\frac{1-s}{2}\right)^p$, which does not have roots smaller than $1$. Furthermore, the reasoning presented above shows that $\tau^a\in L^{p/2}$ for any $a<1$ and any starting points $x$, $y$. Next, suppose that $p+d<2$, assume that $g_{p,d}$ has at least one zero smaller than $1$ and let $a$ stand for the smallest root. Suppose that the starting points $x$, $y$ satisfy $y<\frac{1+a}{1-a}x$. As we have just observed, $\tau^a\in L^{(2-d)/2}$, which in view of \eqref{BDG} yields \begin{equation}\label{r*}
R_{\tau^a}^*\in L^{2-d}.
\end{equation} 
By Lemma \ref{optional},
$$ W(x,y)=\E W(R_{\tau^a\wedge t},S_{\tau^a\wedge t})=\E W(R_t,S_t)1_{\{\tau^a> t\}},$$
because $W(R_{\tau^a},S_{\tau^a})=(R_{\tau^a}+S_{\tau^a})^pg(a)=0$. However, the expression on the right hand side converges to zero as $t\to\infty$. Indeed,
\begin{equation*}
\begin{split}
|\E W(R_t,S_t)1_{\{\tau^a>t\}}|&\leq \sup_{[-1,a]}|g|\E(R_t+S_t)^p1_{\{\tau^a>t\}}\\
&\leq \sup_{[-1,a]}|g|\left(\frac{2}{1-a}\right)^p\E R_t^p1_{\{\tau^a>t\}},
\end{split}
\end{equation*}
where in the latter passage we have used the definition of $\tau^a$.
By  Lebesgue's dominated convergence theorem and \eqref{r*}, letting $t\to \infty$ yields $W(x,y)=0$ and hence $a$ is not the smallest root of $g$. The obtained contradiction completes the proof.
\end{proof}

\begin{remark}\label{constant}
Before we proceed, let us assure the reader that $C_{p,2}$ and the constant in \eqref{BJVin} coincide, though the latter involves the \emph{largest} root $z_p$ of a solution to \eqref{diff}. The reason for this is that Borichev, Janakiraman and Volberg work with the reflected function $s\mapsto g_{p,2}(-s)$, which also solves \eqref{diff}; thus $z_p=-z_0$ and $\frac{1+z_p}{1-z_p}=C_{p,2}$.
\end{remark}

In the remainder of this section we investigate several other  properties of the function $g$ which will be useful later.  Such technical properties are always part of these type of optimal constant problems.  Different (but in the same spirit) technical results are also derived in \cite{BJV1} and \cite{BJV2}. 

\begin{lemma}\label{lem1}
The function $g$ enjoys the following.
\begin{itemize}
\item[(i)] We have $g'(s)>0$ for $s\in (-1,z_0]$.

\item[(ii)] If $p\leq 2$, then $g$ is convex on $[-1,z_0)$. If $p\geq 2$, then $g$ is concave on $[-1,z_0)$. If $p\neq 2$, then the convexity/concavity is strict.

\item[(iii)] We have $z_0> 0$ for $p< 2$, $z_0=0$ for $p=2$, and $z_0< 0$ for $p> 2$.
\end{itemize}
\end{lemma}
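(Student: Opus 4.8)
The plan is to split according to the three ranges $0<p<2$, $p=2$, $p>2$, reading the monotonicity and convexity off the power series \eqref{defg} when $p\le 2$ and off the equation \eqref{diff} itself when $p>2$. As a preliminary I would rewrite the recursion \eqref{system} in the one‑step form
\[
 a_{n+1}=\frac{c_n}{2(n+1)(n+d-1)}\,a_n,\qquad c_n:=n(n-1)+2(d-1)n-p(d-1)\quad(n\ge 0),
\]
with $a_0=-1$. Here the denominators are strictly positive, $c_n$ is strictly increasing in $n$ (as $c_{n+1}-c_n=2(n+d-1)>0$), $c_0=-p(d-1)<0$, $c_1=(d-1)(2-p)$, and in particular $a_1=p/2>0$, $a_2=\frac{p(2-p)(d-1)}{8d}$.

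When $p<2$ one has $c_1>0$, hence $c_n>0$ for every $n\ge 1$, so $a_0<0$ and $a_n>0$ for all $n\ge 1$. Differentiating \eqref{defg} termwise gives $g'(s)\ge a_1>0$ and $g''(s)\ge 2a_2>0$ for all $s\in[-1,1)$, which is (i) together with the strict convexity in (ii); for (iii) I would evaluate \eqref{diff} at $s=0$ to obtain $g''(0)=-p(d-1)g(0)$, whence $g(0)<0\le g(z_0)$ and, $g$ being strictly increasing, $z_0>0$ (if instead $g$ has no zero in $[-1,1)$, i.e.\ $p+d\le 2$, then $z_0=1>0$ by convention). The case $p=2$ is trivial: $c_1=0$ forces $a_n=0$ for all $n\ge 2$, so $g(s)=s$, giving $g'\equiv 1>0$, $g''\equiv 0$, and $z_0=0$.

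The case $p>2$ is where the work lies, and the main obstacle is that here $c_0,c_1<0$ while $c_n>0$ for large $n$, so the coefficients alternate in sign for small $n$ and then stabilize, and \eqref{defg} gives no convexity information. Instead I would work on $[-1,z_0)$ — where $z_0<1$ by Lemma \ref{roots} (since $p+d>2$), $g<0$, and $g(z_0)=0$ — and prove the concavity of (ii) first. We have $g''(-1)=2a_2<0$. Suppose $g''$ vanishes in $(-1,z_0)$ and let $s_1$ be its smallest zero, so $g''<0$ on $[-1,s_1)$ and $g$ is strictly concave on $[-1,s_1]$. Differentiating \eqref{diff} yields $(1-s^2)g'''-2ds\,g''+(d-1)(p-2)g'=0$, which at $s_1$ reads $(1-s_1^2)g'''(s_1)=-(d-1)(p-2)g'(s_1)$; since $g''<0$ on $[-1,s_1)$ and $g''(s_1)=0$ we get $g'''(s_1)\ge 0$, hence $g'(s_1)\le 0$. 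As $g'(-1)=p/2>0$ and $g'$ is strictly decreasing on $[-1,s_1]$, there is $s_0\in(-1,s_1]$ with $g'(s_0)=0$ and $g'>0$ on $[-1,s_0)$, so $-1<g(s_0)<0$; but \eqref{diff} at $s_0$ then gives $(1-s_0^2)g''(s_0)=-p(d-1)g(s_0)>0$, i.e.\ $g''(s_0)>0$, contradicting $g''\le 0$ on $[-1,s_1]$. Hence $g''<0$ throughout $[-1,z_0)$.

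Finally, (i) and (iii) for $p>2$ follow from this concavity. For (i): $g'$ is strictly decreasing on $[-1,z_0)$, so if $g'(s_2)\le 0$ for some $s_2\in(-1,z_0]$ then $g$ is non‑increasing on $[s_2,z_0]$ and $0=g(z_0)\le g(s_2)\le 0$, forcing $s_2=z_0$ and $g'(z_0)\le 0$; but $g'(z_0)<0$ would put $g>0$ just to the left of $z_0$, while $g'(z_0)=0=g(z_0)$ forces $g\equiv 0$ by uniqueness for \eqref{diff} at the regular point $z_0$, against $g(-1)=-1$ — so $g'>0$ on $(-1,z_0]$. For (iii): by continuity $g''(z_0)\le 0$, and \eqref{diff} at $z_0$ gives $(1-z_0^2)g''(z_0)=2(d-1)z_0 g'(z_0)$, so $z_0 g'(z_0)\le 0$ and hence $z_0\le 0$; and $z_0=0$ is excluded, since then \eqref{diff} and its derivative at $0$ give $g''(0)=0$ and $g'''(0)=-(d-1)(p-2)g'(0)<0$, making $g''>0$ just to the left of $0$. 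Thus $z_0<0$, completing the proof. The delicate step throughout is the concavity for $p>2$: the argument is self‑bootstrapping (concavity, then strict monotonicity, then the sign of $z_0$), and all three rely on feeding the special values $g(-1)=-1$, $g'(-1)=p/2$, and the sign of $g$ on $[-1,z_0)$ back into \eqref{diff} and its first derivative.
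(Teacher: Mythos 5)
Your proof is correct, and it takes a somewhat different route from the paper's. The paper treats (i), (ii), (iii) in that fixed order for all $p$ at once: it proves $g'>0$ by a contradiction at the first critical point (using \eqref{diff} and the fact that $g<0$ on $[-1,z_0)$), then proves convexity/concavity of $(2-p)g$ by an analogous contradiction at the first zero of $(2-p)g''$, and that argument explicitly invokes part (i) to finish. You instead split by the sign of $p-2$. For $p<2$ you observe that the one-step form of \eqref{system} has $c_n>0$ for all $n\ge 1$ and $a_1=p/2>0$, so $a_n>0$ for $n\ge 1$; this gives $g'>0$ and $g''>0$ on all of $[-1,1)$ directly from the power series, which is more elementary than the paper's contradiction argument and immediately covers (i) and (ii). For $p>2$ you invert the logical order: you establish concavity (ii) first by a self-contained contradiction at the first zero of $g''$ (it does not assume (i)), and then deduce (i) and (iii) as consequences of that concavity, whereas the paper proves (i) independently and feeds it into (ii). Both approaches rest on the same raw ingredients — \eqref{diff} and its first derivative, the boundary data $g(-1)=-1$, $g'(-1^+)=p/2$, $g''(-1^+)=2a_2$, and the sign of $g$ on $[-1,z_0)$ — so the difference is one of decomposition and dependency structure rather than of substance. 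One small dividend of your version: in (iii) you argue via $g''(0)=-p(d-1)g(0)$ (for $p<2$) and via the sign of $g'''(0)$ from the differentiated equation (for $p>2$), which handles the borderline possibility $z_0=0$ explicitly, whereas the paper's inequality $0\ge -2(d-1)z_0\,g'(z_0)$ becomes vacuous at $z_0=0$ and would need the same extra observation to close that corner.
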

\begin{proof}
(i) Observe that $g'(z_0)=0$ is impossible: then by \eqref{diff} and straightforward induction we would have $g^{(n)}(z_0)=0$ for all $n\geq 0$, which would further imply that $g$ is identically $0$, as an analytic function. Consequently, all we need is to verify the inequality $g'>0$ on the open interval $(-1,z_0)$. The function $g$ is strictly increasing in a neighborhood of $-1$, since $\lim_{s\downarrow -1}g'(s)=a_1=p/2$. 
Suppose that the set $\{s< z_0:g'(s)=0\}$ is nonempty and let $s_0$ denote its infimum. Then 
$s_0\in (-1,z_0)$, $g'(s_0)=0$ and $g'(s)>0$ for $s<s_0$. This gives $g''(s_0)\leq 0$, which combined with \eqref{diff} implies $g(s_0)\geq 0$, a contradiction.

(ii) The case $p=2$ is trivial, since then $g(s)=s$ for all $s\in [-1,1]$; thus we may and do assume that $p\neq 2$. We shall prove that $(2-p)g$ is strictly convex on $[-1,z_0]$, using essentially the same argument as in (i). We have that $(2-p)g''$ is positive in the neighborhood of $-1$, since, by \eqref{system},
$$ \lim_{s\downarrow -1}g''(s)=2a_2=\frac{p(2-p)(d-1)}{4d}.$$
Next, assume that the set $\{s<z_0: (2-p)g''(s)=0\}$ is nonempty and denote its infimum by $s_0$. Then $s_0\in (-1,z_0)$, $(2-p)g''(s_0)=0$ and $(2-p)g''(s)>0$ for $s\in (-1,s_0)$, which in particular implies $(2-p)g'''(s_0)\leq 0$. Differentiating \eqref{diff} and applying the latter inequality yields
\begin{equation*}
\begin{split}
0&\geq (1-s_0^2)(2-p)g'''(s_0)\\
&=2(2-p)ds_0g''(s_0)+(2-p)^2(d-1)g'(s_0)=(2-p)^2(d-1)g'(s_0),
\end{split}
\end{equation*}
which contradicts (i).

(iii) As previously, the case $p=2$ is trivial (we have $g(s)=s$ for all $s$). If $p\leq (2-d)_+$, then $z_0=1$. If $(2-d)_+<p<2$, then using \eqref{diff} and (ii),
$$ 0=(1-z_0^2)g''(z_0)-2(d-1)z_0g'(z_0)\geq -2(d-1)z_0g'(z_0).$$
Consequently, if the assertion was not true, we would get $g'(z_0)\leq 0$. By (i), the mean value theorem would imply that  $g''$ is negative at some point in the interval $(-1,z_0)$.  However, this is impossible in view of (ii). If $p>2$, then substituting $s=0$ into \eqref{diff} gives $g''(0)+p(d-1)g(0)=0$. Now $z_0>0$  would imply $g(0)<0$ and $g''(0)>0$, which has been excluded this in (ii). On the other hand, $z_0=0$ also leads to a contradiction. Indeed, it yields $g''(0)=0$ and hence $g'''(0)\geq 0$, in view of (ii). However, differentiating \eqref{diff} gives  $g'''(0)=(2-p)(d-1)g'(0)<0$.
\end{proof}

For any $p>0$, we introduce the function $v=v_p:[-1,1]\to \R$ defined by 
$$ v(s)=\left(\frac{1+s}{2}\right)^p-\left(\frac{1+z_0}{1-z_0}\right)^p\left(\frac{1-s}{2}\right)^p.$$
We have 
\begin{equation*}
\begin{split}
 v''(s)=\frac{p(p-1)}{2^p}\left[(1+s)^{p-2}-\left(\frac{1+z_0}{1-z_0}\right)^p(1-s)^{p-2}\right].
 \end{split}
 \end{equation*}
For $p\neq 2$, let $s_1=s_1(p)$ denote the unique root of the expression in the square brackets above. It is easy to verify that $s_1<0$ and $s_1<z_0$, using Lemma \ref{lem1} (iii). For $p\geq 1$, 
let $c=c(p)$ be the unique positive constant for which $cg'(z_0)=v'(z_0)$.  A calculation gives
$$ c=\frac{2p(1+z_0)^{p-1}}{2^pg'(z_0)(1-z_0)}.$$

\begin{lemma}
(i) Let $1\leq p\leq 2$. Then for $s\in [-1,z_0]$ we have
\begin{equation}\label{maj<2}
cg(s)\geq v(s).
\end{equation}

(ii) Let $p\geq 2$. Then for $s\in [-1,z_0]$ we have
\begin{equation}\label{maj>2}
cg(s)\leq v(s).
\end{equation}
\end{lemma}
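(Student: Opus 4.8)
The plan is to compare $cg$ and $v$ by analyzing the difference $\varphi=cg-v$ on $[-1,z_0]$, using exactly the same ODE-bootstrapping technique that was used in Lemmas \ref{roots} and \ref{lem1}: if a certain derivative of $\varphi$ vanishes first at some interior point, plug into the differential equation to derive a sign contradiction. First I would record the boundary data. At $s=-1$ we have $g(-1)=-1$ and $v(-1)=-1$, so $\varphi(-1)=1-c$; since $c=c(p)>0$ but need not equal $1$, this endpoint value is not automatically zero, so I will instead anchor the argument at $s=z_0$. There $v(z_0)=\left(\tfrac{1+z_0}{2}\right)^p-\left(\tfrac{1+z_0}{1-z_0}\right)^p\left(\tfrac{1-z_0}{2}\right)^p=0$, and $g(z_0)=0$ by definition of $z_0$, so $\varphi(z_0)=0$; moreover $c$ was defined precisely so that $cg'(z_0)=v'(z_0)$, i.e. $\varphi'(z_0)=0$. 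Thus $\varphi$ has a double zero at the right endpoint, and the claim \eqref{maj<2}/\eqref{maj>2} is the statement that $\varphi\geq 0$ on $[-1,z_0]$ when $1\leq p\leq 2$ and $\varphi\leq 0$ when $p\geq 2$.

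The key step is to control $\varphi''$. Differentiating \eqref{diff} twice is awkward because $v$ does not solve \eqref{diff}; instead I would compute directly that $(1-s^2)\varphi''(s)-2(d-1)s\varphi'(s)+p(d-1)\varphi(s)=-\big[(1-s^2)v''(s)-2(d-1)sv'(s)+p(d-1)v(s)\big]=:-\psi(s)$, where $\psi$ is the result of applying the differential operator in \eqref{diff} to the explicit function $v$. A routine computation with $v(s)=\left(\tfrac{1+s}{2}\right)^p-K\left(\tfrac{1-s}{2}\right)^p$ (with $K=\left(\tfrac{1+z_0}{1-z_0}\right)^p$) shows $\psi$ is a combination of $(1+s)^{p-1}$, $(1-s)^{p-1}$ terms (the cross terms from $1-s^2=(1-s)(1+s)$ and the first-order part cancel against the zeroth-order part up to such boundary factors), and in particular $\psi$ has a definite sign on $(-1,z_0)$ once one checks it at $s_1$ (the single sign change of $v''$) and uses $s_1<z_0<0$ or $s_1<0<z_0$ according to Lemma \ref{lem1}(iii). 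Once the sign of $\psi$ is pinned down, I have a differential inequality: $(1-s^2)\varphi''=2(d-1)s\varphi'-p(d-1)\varphi-\psi$.

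From here I would run the first-zero argument. Suppose $p<2$ (the case $p=2$ is trivial since then $g(s)=s$, $z_0=0$, $c=1$ and one checks $v(s)\le s$ directly). Assume for contradiction that $\varphi<0$ somewhere on $[-1,z_0)$. Since $\varphi(z_0)=\varphi'(z_0)=0$ and, by Lemma \ref{lem1}(ii), $g$ is strictly convex on $[-1,z_0)$ while $v$ is convex only on $[s_1,z_0]$, I would first establish $\varphi\ge 0$ near $z_0$ (using $\varphi''(z_0^-)>0$, which follows from the differential inequality together with the known sign of $\psi(z_0)$ and $\varphi(z_0)=\varphi'(z_0)=0$), hence there is a largest point $s_0\in(-1,z_0)$ with $\varphi(s_0)=0$; on $(s_0,z_0)$ we have $\varphi>0$, so $\varphi'(s_0)\ge 0$. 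If instead the infimum of $\{s<z_0:\varphi(s)=0\}$ or of $\{s:\varphi'(s)=0\}$ is the relevant extremal point, plugging $s_0$ into the differential inequality $(1-s_0^2)\varphi''(s_0)=2(d-1)s_0\varphi'(s_0)-\psi(s_0)$ and using $s_0<z_0\le 0$ together with the sign of $\psi$ forces a sign on $\varphi''(s_0)$ incompatible with $\varphi$ having a local minimum (resp. a first critical point) there. This is the standard Burkholder/ODE contradiction. The case $p>2$ is identical with all inequalities reversed, using concavity of $g$ from Lemma \ref{lem1}(ii).

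\textbf{Main obstacle.} The delicate point is not the first-zero mechanism — that is routine once set up — but bookkeeping the sign of the auxiliary function $\psi$ (equivalently, controlling where $v$ bends relative to $z_0$ and to the root $s_1$ of $v''$), and making sure the boundary analysis at $s=z_0$ correctly produces the strict local inequality $\pm\varphi>0$ just to the left of $z_0$ that seeds the contradiction. One must use $c>0$, the definition $cg'(z_0)=v'(z_0)$, the explicit location $s_1<0$, and Lemma \ref{lem1}(i),(iii) in a coordinated way; the signs of $s$, of $g'$, and of $\psi$ all enter the differential inequality with the same orientation only because $z_0\le 0$ when $p\ge 2$ and $z_0\ge 0$ when $p\le 2$, so the two cases must be handled with care to keep every inequality pointing the right way.
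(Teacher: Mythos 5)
Your plan -- analyze $\varphi = cg - v$ via the ODE, anchor at $z_0$, and run a first-zero bootstrap -- is the same family of argument the paper uses, and the source function $\psi$ you introduce does carry the right information (one computes $\psi(s)=\frac{p(p+d-2)(1-s^2)}{2^p}\left[(1+s)^{p-2}-\left(\frac{1+z_0}{1-z_0}\right)^p(1-s)^{p-2}\right]$, which has the same sign-change point $s_1$ as $v''$). But the execution as written contains several errors that would derail it.

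First, the sign of $z_0$ is inverted. You write ``using $s_0<z_0\le 0$'' to handle the case $1\le p<2$, but Lemma~\ref{lem1}(iii) gives $z_0>0$ for $p<2$ and $z_0<0$ only for $p>2$. The inequality $s_0\le 0$ does not come from $z_0$ at all; in the paper it comes from $s_0<s_1$ together with $s_1<0$, and $s_1$ is where $v''$ changes sign, not a root of $g$. Relatedly, you claim ``$v$ is convex only on $[s_1,z_0]$'' for $1\le p<2$; in fact $v''>0$ on $(-1,s_1)$ and $v''<0$ on $(s_1,z_0]$, so $v$ is concave on that interval. This matters because the step that pins $s_0$ to the left of $s_1$ is exactly the comparison ``$(2-p)g$ strictly convex versus $(2-p)v$ concave on $[s_1,z_0]$'' (which, with the double contact $\varphi(z_0)=\varphi'(z_0)=0$, gives $(2-p)\varphi>0$ there). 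You allude to this but never actually carry it out, and with the concavity of $v$ written backwards you could not.

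Second, the contradiction you propose is not one. Having placed $s_0$ as the largest zero of $\varphi$ in $(-1,z_0)$ with $\varphi>0$ on $(s_0,z_0)$, you only get $\varphi'(s_0)\ge 0$, and the ODE then yields $\varphi''(s_0)<0$ (once the signs of $s_0$ and $\psi(s_0)$ are correctly sorted). That is perfectly compatible with $\varphi$ crossing from negative to positive at $s_0$ with $\varphi'(s_0)>0$; it is only a contradiction if $s_0$ were a local minimum, which you have not shown and which is generally false. The paper's contradiction is different in kind: one pushes the ODE for $g$ alone through the contact conditions $cg(s_0)=v(s_0)$, $(2-p)cg'(s_0)\ge(2-p)v'(s_0)$, $s_0<0$, arrives at the explicit quantity $-\frac{p(2-p)(d-1)(1-s_0^2)}{2^p}\left[(1+s_0)^{p-2}-\left(\frac{1+z_0}{1-z_0}\right)^p(1-s_0)^{p-2}\right]>0$, and the sign of that bracket forces $s_0\ge s_1$, directly contradicting the previously established $s_0<s_1$. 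That is the step your sketch is missing: the explicit calculation whose sign is controlled by $s_1$, not a second-derivative sign at $s_0$.

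In short: right template, but the lemma's content lives precisely in the sign bookkeeping ($z_0$ versus $0$, $s_0$ versus $s_1$, convexity of $v$), and those are the pieces your proposal gets wrong or omits.
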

\begin{proof}
For $p=2$ we have $cg(s)=v(s)$, so both \eqref{maj<2} and \eqref{maj>2} hold true; hence we may assume that $p\neq 2$. We treat (i) and (ii) in a unified manner and show that $$c(2-p)g(s)\geq (2-p)v(s)$$
for $s\in [-1,z_0]$.  
We have that $(2-p)v''(s)\geq 0$ for $s\in (-1,s_1)$ and $(2-p)v''(s)\leq 0$ for $s\in (s_1,1)$. Since $(2-p)g$ is a strictly convex function, we see that \eqref{maj<2} holds on $[s_1,z_0]$ and is strict on $[s_1,z_0)$. 
Suppose that the set $\{s<z_0:cg(s)=v(s)\}$ is nonempty and let $s_0$ denote its supremum. Then $s_0<s_1$, $cg(s_0)=v(s_0)$ and $(2-p)cg(s)>(2-p)v(s)$ for $s\in (s_0,z_0)$, which implies $(2-p)cg'(s_0)\geq (2-p)v'(s_0)$. In consequence, by \eqref{diff},
\begin{equation*}
\begin{split}
 0&< (1-s_0^2)(2-p)cg''(s_0)\\
&=(d-1)(2-p)(2s_0cg'(s_0)-pg(s_0))\\
 &\leq (d-1)(2-p)(2s_0v'(s_0)-pv(s_0))\\
 &=-\frac{p(2-p)(d-1)(1-s_0^2)}{2^p}\left[(1+s_0)^{p-2}-\left(\frac{1+z_0}{1-z_0}\right)^p(1-s_0)^{p-2}\right].
\end{split}
\end{equation*}
This yields $s_0\geq s_1$ (see the definition of $s_1$), a contradiction.
\end{proof}

The inequality \eqref{maj<2} is also valid for $p\in ((2-d)_+,1)$, but this seems to be more difficult. To overcome this problem, fix such a $p$ and consider the set
$$ \{\alpha\geq 0: \alpha g(s)\geq v(s)\quad \mbox{for all }s\in [-1,z_0]\}.$$
Of course, this set is a closed, bounded subinterval of $\R_+$ and contains $0$. In fact, it has a nonempty interior, since $v$ is strictly increasing, $v'(z_0)>0$ and $g$ is a convex function. Define $c=c(p)$ as the right endpoint of this interval. Then, obviously, we have
\begin{equation}\label{maj<1}
cg(s)\geq v(s),\qquad \mbox{for }s\in [-1,z_0],
\end{equation}
and we can show the following.

\begin{lemma}
There exists $z_1=z_1(p)\in (s_1,z_0]$ for which 
\begin{equation}\label{match}
cg(z_1)=v(z_1),\qquad  cg'(z_1)=v'(z_1)
\end{equation}
and
\begin{equation}\label{boundv''}
v''(s)\geq 0, \qquad \mbox{for }s\geq z_1.
\end{equation}
\end{lemma}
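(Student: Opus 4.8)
The plan is to prove something slightly sharper than asked: that the constant $c$ of \eqref{maj<1} equals $v'(z_0)/g'(z_0)$ and that one may simply take $z_1=z_0$. First I would collect the elementary facts. By Lemma \ref{lem1}, $g$ is strictly increasing and strictly convex on $[-1,z_0)$ with $g(-1)=-1$ and $g(z_0)=0$, hence $g<0$, $g'>0$, $g''>0$ on $(-1,z_0)$ and $g'(z_0)>0$. A direct inspection of $v=v_p$ gives $v<0$ on $[-1,z_0)$, $v(z_0)=0$, $v'>0$ on $(-1,1)$, and — crucially, since $p<1$ — that $v'(s)\to+\infty$ as $s\downarrow-1$, while $v''\geq0$ exactly on $[s_1,1)$. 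Because $g<0$ on $[-1,z_0)$, the requirement $\alpha g\geq v$ on $[-1,z_0]$ reduces to $\alpha\leq v/g$ on $[-1,z_0)$, so $c=\inf_{[-1,z_0)}(v/g)$.

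The heart of the matter is the sign of the Wronskian-type quantity $D:=v'g-vg'$ on $(-1,z_0)$, since $(v/g)'=D/g^2$. A short computation (or the substitution $(1\pm s)^p$) shows that $v$ satisfies the differential equation obtained from \eqref{diff} by replacing $d-1$ with $1-p$, namely $(1-s^2)v''-2(1-p)sv'+p(1-p)v=0$. Feeding this and \eqref{diff} into $D'=v''g-vg''$ produces the linear first-order equation
\begin{equation*}
D'(s)=\frac{2(1-p)s}{1-s^2}\,D(s)+\frac{2-p-d}{d-1}\,v(s)\,g''(s).
\end{equation*}
On the range $(2-d)_+<p<1$ we have $p+d>2$, so $\frac{2-p-d}{d-1}<0$; as $v<0$ and $g''>0$ on $(-1,z_0)$, the forcing term is strictly positive there. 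Multiplying by the integrating factor $\mu(s)=(1-s^2)^{1-p}>0$ gives $(\mu D)'>0$ on $(-1,z_0)$, so $\mu D$ is strictly increasing. Its boundary values are $\mu(z_0)D(z_0)=0$ (because $g(z_0)=v(z_0)=0$) and, from the expansions $D(s)\sim-p\,2^{-p}(1+s)^{p-1}$ as $s\downarrow-1$ (the bounded parts of $v'g$ and $-vg'$ cancel) together with $\mu(s)\sim2^{1-p}(1+s)^{1-p}$, the limit $\mu(s)D(s)\to-p\,2^{1-2p}<0$. Hence $\mu D<0$, and therefore $D<0$, throughout $(-1,z_0)$.

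Consequently $(v/g)'<0$ on $(-1,z_0)$, so $v/g$ is strictly decreasing there; by l'H\^opital $v(s)/g(s)\to v'(z_0)/g'(z_0)$ as $s\uparrow z_0$. Thus $c=\inf_{[-1,z_0)}(v/g)=v'(z_0)/g'(z_0)$, the infimum being approached only at $z_0$, so $cg>v$ on $[-1,z_0)$ with $cg(z_0)=v(z_0)=0$. Taking $z_1:=z_0$ we obtain \eqref{match} at once: $cg(z_1)=v(z_1)$ and $cg'(z_1)=\dfrac{v'(z_0)}{g'(z_0)}\,g'(z_0)=v'(z_0)=v'(z_1)$; moreover $z_1=z_0\in(s_1,z_0]$ since $s_1<z_0$, and $v''\geq0$ on $[s_1,1)\supseteq[z_1,1)$, which is \eqref{boundv''}.

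I expect the one genuinely delicate point to be the boundary behaviour at $s=-1$: there $v'$ is unbounded, so one must expand $D=v'g-vg'$ to the order at which the two finite contributions cancel and only the singular term $-p\,2^{-p}(1+s)^{p-1}$ survives, and then match this against the decay $\mu(s)=(1-s^2)^{1-p}\to0$ to get a finite, strictly negative limit; this is exactly what forces $v/g$ to keep decreasing all the way to $z_0$ rather than turning around inside $(s_1,z_0)$. Everything else is routine: deriving the equation for $v$, the equation for $D$, the integrating factor, and the l'H\^opital limit at $z_0$. A purely soft argument — noting that $c=\min_{[-1,z_0]}(v/g)$ is attained at some $z_1\in(-1,z_0]$, which necessarily satisfies \eqref{match} — handles everything except the constraint $z_1>s_1$, and excluding $z_1\le s_1$ still seems to need precisely the sign information on $D$ above.
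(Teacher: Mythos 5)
Your proof is correct, and it takes a genuinely different and in fact sharper route than the paper's. The paper argues by cases: starting from the inequality $cg'(z_0)\le v'(z_0)$ it first disposes of equality by taking $z_1=z_0$, and otherwise locates a touching point $z_1<z_0$ from the definition of $c$ and verifies $v''(z_1)\ge 0$ (hence $z_1\ge s_1$) by substituting the matching conditions into the ODE \eqref{diff} and simplifying via the explicit formula $(1-s^2)v''-2(1-p)sv'+p(1-p)v=0$. You instead show the second case never occurs: you identify $c=\inf_{[-1,z_0)}(v/g)$, derive the first-order equation
\begin{equation*}
D'(s)=\frac{2(1-p)s}{1-s^2}\,D(s)+\frac{2-p-d}{d-1}\,v(s)g''(s),\qquad D:=v'g-vg',
\end{equation*}
observe that the forcing term is strictly positive (because $p+d>2$, $v<0$, $g''>0$), and integrate with $\mu=(1-s^2)^{1-p}$ to conclude $\mu D$ increases strictly from $-p\,2^{1-2p}<0$ to $0$, hence $D<0$ and $v/g$ is strictly decreasing, so $c=v'(z_0)/g'(z_0)$ and $z_1=z_0$. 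I have checked the ODE for $v$, the linear equation for $D$, the integrating factor, and the boundary asymptotics $D(s)\sim -p\,2^{-p}(1+s)^{p-1}$ and $\mu(s)\sim 2^{1-p}(1+s)^{1-p}$ near $s=-1$, and they are all correct. Your argument buys something extra: it extends the explicit identification $c=v'(z_0)/g'(z_0)$ (proved in the preceding lemma only for $p\ge 1$) to all $p\in((2-d)_+,1)$, thereby settling the remark in the paper that \eqref{maj<2} ``is also valid for $p\in((2-d)_+,1)$, but this seems to be more difficult.'' The trade-off is that your argument requires the Wronskian computation and a careful asymptotic analysis at $s=-1$ where $v'$ is unbounded, whereas the paper's two-line case analysis only needs the matching inequalities at a putative interior touching point together with the ODEs; conversely, the paper's proof only delivers $z_1\ge s_1$ and leaves the location of $z_1$ undetermined.
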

\begin{proof}
We have $cg(z_0)=v(z_0)$, so \eqref{maj<1} implies $cg'(z_0)\leq v'(z_0)$, or
\begin{equation}\label{boundc}
 c\leq \frac{2p(1+z_0)^{p-1}}{2^pg'(z_0)(1-z_0)}.
\end{equation}
If we have equality here, we can take $z_1=z_0$. Then \eqref{match} is obviously satisfied and the validity of \eqref{boundv''} follows from
$$ v''(s)\geq v''(z_0)=-\frac{p(p-1)(1+z_0)^{p-2}z_0}{2^{p-2}(1-z_0)^2}> 0.$$
Suppose that the inequality in \eqref{boundc} is strict: $cg'(z_0)<v'(z_0)$. Then the set
$$ \{z<z_0: cg(z)=v(z)\}$$
is nonempty (if it was not, we would be able to increase $c$ a little bit and \eqref{maj<1} would still hold). Let $z_1$ denote the infimum of this set. Then $z_1>-1$ and it is clear that \eqref{match} holds true, as well as the bound $cg''(z_1)\geq v''(z_1)$. By virtue of \eqref{diff}, we get
\begin{equation*}
\begin{split}
0&\geq (1-s^2)v''(z_1)-2(d-1)z_1v(z_1)+p(d-1)v(z_1)\\
&=\frac{p(p+d-2)(1-z_1^2)}{2^p}\left[(1+z_1)^{p-2}-\left(\frac{1+z_0}{1-z_0}\right)^p(1-z_1)^{p-2}\right]\\
&=\frac{(p+d-2)(1-z_1^2)}{p-1}v''(z_1). 
\end{split}
\end{equation*}
This gives \eqref{boundv''}, since $v''$ is nondecreasing.
\end{proof}

The next properties of $g$ we will need are gathered in the following.
\begin{lemma} Assume that $p+d>2$ and $s\in (-1,z_0]$.
\begin{itemize}
\item[(i)] We have
\begin{equation}\label{diffin1}
 (2-p)(1-s^2)g''(s)-2(p-1)(p-2)sg'(s)+p(p-1)(p-2)g(s)\geq 0.
\end{equation}

\item[(ii)] We have
\begin{equation}\label{diffin2}
 s(1-s^2)g''(s)-[p+d-2+(d-p)s^2]g'(s)+p(d-1)sg(s)\leq 0.
\end{equation}
\end{itemize}
\end{lemma}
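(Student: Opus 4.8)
Both inequalities are linear (with polynomial-in-$s$ coefficients) combinations of $g$, $g'$, $g''$, so the natural strategy is the same sign-chasing argument that has been used repeatedly above: reduce to checking the inequality at the endpoint $s=-1$ and then propagate it across $(-1,z_0]$ by looking at the first point where equality could occur and differentiating the defining ODE \eqref{diff} there. Throughout I would use freely the facts from Lemma \ref{lem1}: $g>0$ on $[-1,z_0)$ (since $g(-1)=-1<0$, $g$ is increasing, and $z_0$ is its first zero, $g<0$ on $[-1,z_0)$ — I will keep the sign straight below), $g'>0$ on $(-1,z_0]$, and the convexity/concavity according as $p\lessgtr 2$, together with the small-$s$ expansions $a_1=p/2$, $2a_2=p(2-p)(d-1)/(4d)$.

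\emph{Part (i).} Write $\Phi(s)$ for the left-hand side of \eqref{diffin1}. Using \eqref{diff} to substitute $(1-s^2)g''=2(d-1)sg'-p(d-1)g$, one gets
\[
\Phi(s)=\big[2(2-p)(d-1)-2(p-1)(p-2)\big]sg'(s)+\big[-p(2-p)(d-1)+p(p-1)(p-2)\big]g(s),
\]
i.e. $\Phi(s)=(p-2)(d-1-(p-1))\,\big(-2sg'(s)+pg(s)\big)\cdot(\text{const})$ after collecting terms; more transparently $\Phi(s)=(2-p)(d-p)\big(pg(s)-2sg'(s)\big)$. So \eqref{diffin1} is equivalent to a sign statement about $h(s):=pg(s)-2sg'(s)$ weighted by $(2-p)(d-p)$. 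I would then analyze $h$: $h(-1)=pg(-1)+2g'(-1)=-p+p=0$, and $h'(s)=(p-2)g'(s)-2sg''(s)$. On $(-1,z_0]$, $g'>0$; and $(2-p)g''\ge 0$ with $s$ eventually nonnegative near $z_0$ while near $-1$ one uses the explicit $g''$-sign. The cleanest route is to show $(2-p)h$ has a definite sign on $(-1,z_0]$ by the same infimum-of-bad-set device: if $(2-p)h$ vanished first at some $s_0\in(-1,z_0)$, evaluate $(2-p)h'(s_0)$ and derive a contradiction with the convexity of $(2-p)g$ and $g'(s_0)>0$. A subtlety is the factor $(d-p)$, whose sign is not fixed; I expect this to force the statement to hold because the two places where $d-p$ and $2-p$ change sign interact so that $(2-p)(d-p)h(s)$ stays $\ge 0$ — this will need a short case split on the ordering of $p$, $2$, and $d$ together with the already-established location of $z_0$ from Lemma \ref{lem1}(iii).

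\emph{Part (ii).} Let $\Psi(s)$ denote the left-hand side of \eqref{diffin2}. Again eliminate $g''$ via \eqref{diff}: $s(1-s^2)g''=2(d-1)s^2g'-p(d-1)sg$, so
\[
\Psi(s)=\big[2(d-1)s^2-(p+d-2)-(d-p)s^2\big]g'(s)+\big[-p(d-1)s+p(d-1)s\big]g(s)=\big[(d-2+p)s^2-(p+d-2)\big]g'(s),
\]
that is $\Psi(s)=(p+d-2)(s^2-1)g'(s)$. Since $p+d>2$, $s^2-1<0$ on $(-1,z_0]\subset(-1,1)$, and $g'(s)>0$ there by Lemma \ref{lem1}(i), this is manifestly $\le 0$, with equality only at $s=-1$. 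So (ii) is essentially immediate once \eqref{diff} is used to clear the second derivative.

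\emph{Where the work is.} Part (ii) collapses to a one-line computation, so the real content — and the expected obstacle — is the bookkeeping in part (i): after reducing \eqref{diffin1} to the sign of $(2-p)(d-p)\big(pg-2sg'\big)$, one must verify that this product is nonnegative on all of $(-1,z_0]$, which requires combining (a) $h(-1)=0$ and a monotonicity/convexity propagation argument for $h$, with (b) a careful case analysis of the signs of $2-p$ and $d-p$ using the precise position of $z_0$ relative to $0$ from Lemma \ref{lem1}(iii) and, when $p<1$, possibly the refined bounds \eqref{maj<1}–\eqref{boundv''}. I would organize part (i) as: first the algebraic reduction via \eqref{diff}; then the claim "$(2-p)\big(pg(s)-2sg'(s)\big)\ge 0$ on $(-1,z_0]$" proved by the standard first-bad-point argument (differentiate, use $g'>0$ and strict convexity of $(2-p)g$); and finally checking that multiplying by $(d-p)$ does not spoil the sign, dispatching the one borderline configuration by hand.
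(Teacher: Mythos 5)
Your part (ii) is correct and exactly matches the paper's one-line reduction: substituting for $g''$ via \eqref{diff} collapses everything to $\Psi(s)=-(p+d-2)(1-s^2)g'(s)\le 0$, immediate from Lemma~\ref{lem1}(i) and $p+d>2$.

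Part (i), however, contains an algebraic error that both creates the false obstacle you flag and obscures how short the argument is. From your (correct) first displayed line, the coefficient of $sg'$ is $2(2-p)(d-1)-2(p-1)(p-2)=2(2-p)\big[(d-1)+(p-1)\big]=2(2-p)(d+p-2)$, and similarly the coefficient of $g$ is $-p(2-p)(d+p-2)$. You factored $(d-1)-(p-1)=d-p$ where you should have had $(d-1)+(p-1)=d+p-2$. The correct reduction is
\[
\Phi(s)=(2-p)(d+p-2)\bigl(2sg'(s)-pg(s)\bigr),
\]
and since $d+p-2>0$ is one of the standing hypotheses, the troublesome factor $(d-p)$ of unfixed sign never appears — there is no case split to do. Moreover, the ODE \eqref{diff} gives $2sg'-pg=\dfrac{(1-s^2)g''}{d-1}$, so
\[
\Phi(s)=\frac{(2-p)(p+d-2)(1-s^2)g''(s)}{d-1},
\]
which is the paper's formula; nonnegativity is then immediate from the convexity statement $(2-p)g''\ge 0$ in Lemma~\ref{lem1}(ii). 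So your instinct to reduce to the sign of $(2-p)(pg-2sg')$ is sound, but you do not need the ``first bad point'' propagation you sketch — the ODE turns that quantity directly into a multiple of $(2-p)g''$, whose sign is already known. (Minor: in your preamble you write ``$g>0$ on $[-1,z_0)$'' and then correct it parenthetically; the correct statement, which you use, is $g<0$ there.)
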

\begin{proof}
By \eqref{diff}, the inequality \eqref{diffin1} can be rewritten in the form
$$ \frac{(p+d-2)(2-p)(1-s^2)g''(s)}{d-1}\geq 0,$$
while \eqref{diffin2} is equivalent to 
$$ -(p+d-2)(1-s^2)g'(s)\leq 0.$$
Both these estimates follow at once from Lemma \ref{lem1}. 
\end{proof}

\begin{lemma} Let $s\in (-1,z_0]$. 
\begin{itemize}
\item[(i)] If $(2-d)_+<p\leq 2$, then
\begin{equation}\label{diffin3}
pg(s)+(1-s)g'(s)\geq 0.
\end{equation}

\item[(ii)] If $p\geq 2$, then
\begin{equation}\label{diffin4}
pg(s)-(1+s)g'(s)\leq 0.
\end{equation}
\end{itemize}
\end{lemma}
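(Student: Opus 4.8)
The plan is to treat both parts by the same device: I would show that each of
$$F(s):=pg(s)+(1-s)g'(s),\qquad G(s):=pg(s)-(1+s)g'(s)$$
solves a \emph{first order linear} ODE on $(-1,z_0]$ whose right-hand side is a positive multiple of $g'$, and then integrate. To this end I would start from \eqref{diff} written as $(1-s)(1+s)g''(s)=(d-1)\big(2sg'(s)-pg(s)\big)$, combine it with the elementary identities $2sg'-pg=(1+s)g'-F$ and $2sg'-pg=(s-1)g'-G$ (merely the definitions of $F,G$ rearranged to eliminate $pg$), and substitute into $F'=(p-1)g'+(1-s)g''$ and $G'=(p-1)g'-(1+s)g''$. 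A short computation should then yield
$$F'(s)+\frac{d-1}{1+s}\,F(s)=(p+d-2)\,g'(s),\qquad G'(s)-\frac{d-1}{1-s}\,G(s)=(p+d-2)\,g'(s)$$
on $(-1,z_0]$.

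Next I would integrate, using the integrating factors $(1+s)^{d-1}$ and $(1-s)^{d-1}$ respectively; both are strictly positive on $(-1,z_0]$, the second because $p+d>2$ gives $z_0<1$ by Lemma \ref{roots}. This turns the two equations into
$$\frac{d}{ds}\big[(1+s)^{d-1}F(s)\big]=(p+d-2)(1+s)^{d-1}g'(s),\qquad \frac{d}{ds}\big[(1-s)^{d-1}G(s)\big]=(p+d-2)(1-s)^{d-1}g'(s).$$
Under the hypotheses of each part one has $p+d-2>0$, and $g'>0$ on $(-1,z_0]$ by Lemma \ref{lem1}(i), so both right-hand sides are strictly positive; hence $(1+s)^{d-1}F(s)$ and $(1-s)^{d-1}G(s)$ are strictly increasing on $(-1,z_0]$.

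It then remains to pin down one boundary value of each. For part (i), since $g$ is a convergent power series in $1+s$ (radius $2$), the functions $g,g'$ are continuous at $-1$, so $F$ is bounded near $-1$ — in fact $F(-1)=pg(-1)+2g'(-1)=-p+2\cdot\frac{p}{2}=0$ — and, as $d-1>0$, this forces $(1+s)^{d-1}F(s)\to0$ as $s\downarrow-1$; being strictly increasing, $(1+s)^{d-1}F$ is therefore nonnegative (indeed positive) on $(-1,z_0]$, so $F\ge0$ there, which is \eqref{diffin3}. For part (ii), I would instead evaluate at the right endpoint: $g(z_0)=0$ gives $G(z_0)=-(1+z_0)g'(z_0)<0$, using $z_0>-1$ (as $g(-1)=-1\ne0$) and $g'(z_0)>0$; since $(1-s)^{d-1}G$ is increasing and already negative at $z_0$, it is negative on all of $(-1,z_0]$, so $G\le0$, which is \eqref{diffin4}.

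I do not expect a genuine obstacle: the only points needing care are the regularity of $F,G$ near $s=-1$ used in the limit argument for (i) (immediate from the power series \eqref{defg}) and the fact that $z_0\in(-1,1)$ in the regime $p+d>2$. The one step that really carries the proof is the algebraic collapse of \eqref{diff} together with the definitions of $F$ and $G$ into a first order linear equation with sign-definite forcing $(p+d-2)g'$; once that is in hand, each inequality reduces to a monotonicity statement plus a single boundary evaluation.
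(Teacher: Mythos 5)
Your proof is correct, and it takes a genuinely different route from the paper's. For part (ii), the paper's argument is immediate: on $(-1,z_0]$ one has $g\le 0$ and $g'\ge 0$ by Lemma \ref{lem1}(i), so $pg(s)-(1+s)g'(s)\le 0$ term by term, with no computation at all. For part (i), the paper argues by contradiction at the infimum $s_0$ of the set where \eqref{diffin3} fails; at such a point $pg(s_0)+(1-s_0)g'(s_0)=0$ and $(p-1)g'(s_0)+(1-s_0)g''(s_0)\le 0$, which via \eqref{diff} collapses to $(p+d-2)(1+s_0)g'(s_0)\le 0$, contradicting Lemma \ref{lem1}(i) --- the same template used to prove the earlier lemmas of \S2. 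Your argument instead converts \eqref{diff} into the first-order linear equations $F'+\tfrac{d-1}{1+s}F=(p+d-2)g'$ and $G'-\tfrac{d-1}{1-s}G=(p+d-2)g'$ (both identities check out), and then uses the integrating factors $(1+s)^{d-1}$ and $(1-s)^{d-1}$ to reduce each inequality to monotonicity plus a single endpoint evaluation ($F(-1)=0$ for (i), $G(z_0)=-(1+z_0)g'(z_0)<0$ for (ii)). What this buys you: the two parts become a single mechanism rather than two distinct tricks, and your proof of (i) never uses $p\le 2$, so it actually establishes \eqref{diffin3} for every $p$ with $p+d>2$, which is strictly stronger than the stated lemma. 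What the paper's route buys: part (ii) is one line, and part (i) reuses the ``infimum of the bad set'' device already set up for Lemma \ref{lem1} and the majorization lemmas, keeping \S2 stylistically uniform. Both proofs rest on the same external inputs --- Lemma \ref{lem1}(i), Lemma \ref{roots}, the series \eqref{defg}, and $p+d>2$ --- so there is no gap in your version.
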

\begin{proof} The second statement is trivial, since $g(s)\leq 0$ and $g'(s)\geq 0$. To show (i), note that both sides become equal when we let $s\to -1$ and
$$ \lim_{s\downarrow -1}(pg(s)+(1-s)g'(s))'=\lim_{s\downarrow-1} \big[(p-1)g'(s)+(1-s)g''(s)\big]=\frac{p(p+d-2)}{2d}>0.$$
Therefore, the inequality holds in neighborhood of $-1$. Now, suppose that the set $\{s\leq z_0: pg(s)+(1-s)g'(s)<0\}$ is nonempty and let $s_0$ denote its infimum. Then $s_0\leq z_0$,  $pg(s_0)+(1-s_0)g'(s_0)=0$ and $(p-1)g'(s_0)+(1-s_0)g''(s_0)\leq 0$. Using \eqref{diff}, these the latter two statements yield $ (p+d-2)(1+s_0)g'(s_0)\leq 0,$  
a contradiction with Lemma \ref{lem1} (i).
\end{proof}

\section{Proof of Theorem \ref{mainth}}

Throughout this section, we assume that $0<p<\infty$, $d>1$ are fixed and satisfy $p+d>2$. Recall the numbers $c=c(p)$, $z_0=z_0(p)$ and $z_1=z_1(p)$ introduced in the previous section. 
We start by defining special functions $U=U_{p,d}:\R_+^2\to \R$. For $p<1$, let
$$ U_{p,d}(x,y)=\begin{cases}
c(x+y)^p g_{p,d}\left(\frac{y-x}{x+y}\right) & \mbox{if }y\leq \frac{1+z_1}{1-z_1}x,\\
y^p-C_{p,d}^px^p & \mbox{if }y>\frac{1+z_1}{1-z_1}x
\end{cases}$$
and for $1\leq p\leq 2$,
$$ U_{p,d}(x,y)=\begin{cases}
c(x+y)^p g_{p,d}\left(\frac{y-x}{x+y}\right) & \mbox{if }y\leq \frac{1+z_0}{1-z_0}x,\\
y^p-C_{p,d}^px^p & \mbox{if }y>\frac{1+z_0}{1-z_0}x.
\end{cases}$$
For $p>2$, the formula is slightly different:
$$ U_{p,d}(x,y)=\begin{cases}
-cC_{p,d}^p(x+y)^p g_{p,d}\left(\frac{x-y}{x+y}\right) & \mbox{if }y\geq \frac{1-z_0}{1+z_0}x,\\
y^p-C_{p,d}^px^p & \mbox{if }y<\frac{1-z_0}{1+z_0}x.
\end{cases}$$
Moreover, let $ V_{p,d}(x,y)=y^p-C_{p,d}^px^p$ for any $p$. We will skip the lower indices and write $U$, $V$ instead of $U_{p,d}$ and $V_{p,d}$ as doing so produces no risk of ambiguity. Let
$$ L(x,y)=U_{xx}(x,y)+\frac{(d-1)U_{x}(x,y)}{x},\qquad R(x,y)=U_{yy}(x,y)+\frac{(d-1)U_{y}(x,y)}{y}.$$
We shall need the following facts.
 
 \begin{lemma}\label{propUV}
We have 
 \begin{equation}\label{maj}
 \qquad \qquad U(x,y)\geq V(x,y),
 \end{equation}
 \begin{equation}\label{part1}
 L(x,y)+R(x,y)-2U_{xy}(x,y)\leq 0, \qquad \qquad \qquad 
 \end{equation}
 \begin{equation}\label{part1.5}
 L(x,y)-R(x,y)\leq 0,\qquad \,\,\, 
 \end{equation}
 \begin{equation}\label{part2}
 \quad U_{xy}(x,y)\leq 0,
 \end{equation}
 \begin{equation}\label{part3}
 \qquad \,\,U_{x}(x,y)\leq 0,\qquad  U_{y}(x,y)\geq 0,
 \end{equation}
 for all $(x,y)$ at which the involved partial derivatives of $U$ exist.
 \end{lemma}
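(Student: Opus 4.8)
The plan is to verify Lemma \ref{propUV} by exploiting the fact that $U$ is built out of the solution $g=g_{p,d}$ of the differential equation \eqref{diff} on the ``curved'' region and out of $V$ on the ``flat'' region. First I would set up the chain rule once and for all: writing $s=(y-x)/(x+y)$ and $r=x+y$, a short computation expresses $U_x,U_y,U_{xx},U_{xy},U_{yy}$ in terms of $g,g',g''$ evaluated at $s$, together with powers of $r$ and rational functions of $s$. On the curved region \eqref{finvar} (which is exactly \eqref{diff} rewritten for $W$) already tells us that $L+R-2U_{xy}=0$ there, so \eqref{part1} holds with equality; on the flat region $U=V=y^p-C_{p,d}^px^p$, and $L+R-2U_{xy}=V_{yy}+(d-1)V_y/y+V_{xx}+(d-1)V_x/x$ which is a sum $p(p-1)y^{p-2}+(d-1)py^{p-2}-C_{p,d}^p\big(p(p-1)x^{p-2}+(d-1)px^{p-2}\big)=p(p+d-2)(y^{p-2}-C_{p,d}^px^{p-2})$, and since $p+d>2$ the sign is controlled by comparing $y^{p-2}$ with $C_{p,d}^px^{p-2}$ along the boundary ray and using monotonicity in the relevant range — this is where the choice of $z_1$ (for $p<1$) or $z_0$ (for $1\le p\le2$), and in particular the property \eqref{boundv''}, enters.

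Next I would handle \eqref{maj}. On the flat region it is an equality. On the curved region, after factoring out $r^p=(x+y)^p>0$, the inequality $U(x,y)\ge V(x,y)$ becomes exactly $cg(s)\ge v(s)$ for $s\in[-1,z_0]$ (or $[-1,z_1]$), which is precisely \eqref{maj<2}, \eqref{maj<1}, or \eqref{maj>2} from the previous lemmas (with the sign flip for $p>2$ absorbed into the reflected argument $(x-y)/(x+y)$ and the factor $-cC_{p,d}^p$). For \eqref{part2}, \eqref{part3}, \eqref{part1.5} I would again reduce to statements about $g$ on $(-1,z_0]$: $U_{xy}\le0$ should follow from the concavity/convexity in Lemma \ref{lem1}(ii) together with $g'>0$ (Lemma \ref{lem1}(i)); $U_x\le0$ and $U_y\ge0$ should reduce to \eqref{diffin3}, \eqref{diffin4}, i.e. $pg(s)+(1-s)g'(s)\ge0$ and $pg(s)-(1+s)g'(s)\le0$; and $L-R\le0$, after the chain rule, should reduce to \eqref{diffin2}. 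In each case the flat-region version is the routine computation with $V$ that I sketched above, and one must also check the matching along the boundary — but here no $C^1$-smoothness across the seam is claimed (the lemma only asserts the inequalities where the derivatives exist), so the boundary is a measure-zero set that can be ignored for the purposes of the later It\^o-type argument.

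The main obstacle, I expect, is bookkeeping: translating each of the five displayed inequalities into the corresponding statement about $g,g',g''$ via the chain rule, keeping track of the three separate definitions of $U$ (the ranges $p<1$, $1\le p\le2$, $p>2$, and the reflection $s\mapsto -s$ in the last case), and making sure that the auxiliary inequalities invoked (\eqref{maj<2}--\eqref{maj<1}, \eqref{diffin1}--\eqref{diffin4}, \eqref{boundv''}, and Lemma \ref{lem1}) are applied on exactly the right intervals, $(-1,z_0]$ versus $(-1,z_1]$. There is no deep new idea needed; the content is the verification that the differential equation \eqref{diff} and the one-variable lemmas of \S2 were set up precisely so that these five two-variable inequalities come out. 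I would organize the proof as: (a) state the chain-rule formulas; (b) prove \eqref{maj}; (c) prove \eqref{part1} (equality on the curved part via \eqref{finvar}, sign on the flat part via $p+d>2$ and the boundary comparison); (d) prove \eqref{part1.5} via \eqref{diffin2}; (e) prove \eqref{part2} via Lemma \ref{lem1}; (f) prove \eqref{part3} via \eqref{diffin3}--\eqref{diffin4} — treating the flat region separately and briefly in each case.
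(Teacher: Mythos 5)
Your proposal matches the paper's proof in structure: on the curved region the chain rule reduces each of the five inequalities to a one-variable statement about $g$ from \S2, while on the flat region $U=V$ and one computes directly with $V(x,y)=y^p-C_{p,d}^p x^p$. You correctly pair \eqref{maj} with \eqref{maj<2}/\eqref{maj<1}/\eqref{maj>2}, \eqref{part1} with \eqref{finvar} (i.e.\ \eqref{diff}), \eqref{part1.5} with \eqref{diffin2}, and \eqref{part3} with \eqref{diffin3}--\eqref{diffin4}, and you rightly see that for $p<1$ the flat-region sign of \eqref{part1} needs \eqref{boundv''} rather than a bare monotonicity argument. The one imprecision is \eqref{part2}: a direct computation gives $U_{xy}=(x+y)^{p-2}\bigl[p(p-1)g(s)-2(p-1)sg'(s)-(1-s^2)g''(s)\bigr]$, and $(2-p)$ times this bracket is $-1$ times the left side of \eqref{diffin1}; so $U_{xy}\le 0$ is exactly \eqref{diffin1}, which in turn follows from Lemma~\ref{lem1}(ii) via \eqref{diff} but does not invoke $g'>0$ as you suggest. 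Apart from this mislabelling (and the fact that the paper's own one-line sketch appears to swap the roles of \eqref{diffin1} and \eqref{diffin2} in a way your chain-rule reduction actually corrects), your plan is the paper's proof.
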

 \begin{proof}
In fact, the nontrivial parts of these estimates have been already established in the previous section. For example, suppose that $p<1$. If $y<\frac{1+z_1}{1-z_1}x$, then \eqref{maj} is equivalent to \eqref{maj<1}, both sides of \eqref{part1} are equal (we obtain \eqref{diff}, actually), \eqref{part1.5} reduces to \eqref{diffin1}, \eqref{part2} follows from \eqref{diffin2} and, finally, \eqref{part3} is a consequence of \eqref{diffin3} and \eqref{diffin4}. Suppose then, that $y>\frac{1+z_1}{1-z_1}x$. Then both sides of \eqref{maj} are equal and, since $C_{p,d}\geq 1$, 
\begin{eqnarray*} 
L(x,y)+R(x,y)-2U_{xy}(x,y)&=&p^2y^{p-2}-p^2C_{p,d}^px^{p-2}\\
&\leq& p^2C_{p,d}^{p-2}(1-C_{p,d}^2)x^{p-2}\\
&\leq& 0,
\end{eqnarray*}
so \eqref{part1} is satisfied. Since $L(x,y)\leq 0$ and $R(x,y)\geq 0$, \eqref{part1.5} holds as well. We have $U_{xy}=0$, which gives \eqref{part2}. Finally, \eqref{part3} is trivial. 
The remaining cases $1\leq p\leq 2$ and $p>2$ are verified essentially in the same manner. We leave the details to the reader.
\end{proof}

The proof of the inequality \eqref{mainin} will be based on It\^o formula. However, since $U$ is not of class $C^2$ (at least when $p\neq 2$), we are forced to modify it slightly to ensure the necessary smoothness. To accomplish this we use the ``mollification" trick first employed by Burkholder in \cite{Bur43} and subsequently by Wang in \cite{W}, and others. Consider a $C^\infty$ function $\psi:\R^2\to [0,\infty)$, supported on a ball centered at $0$ and radius $1$, satisfying $\int_{\R^2}\psi=1$. Fix $\delta>0$ and define $U^{\delta},V^{\delta}:[2\delta,\infty)\times [2\delta,\infty)\to\R$ by
\begin{equation*}
\begin{split}
 U^{\delta}(x,y)&=\int_{[-1,1]^2} U(x+\delta-\delta u,y-\delta-\delta v)\psi(u,v)\mbox{d}u\mbox{d}v,\\
 V^{\delta}(x,y)&=\int_{[-1,1]^2} V(x+\delta-\delta u,y-\delta-\delta v)\psi(u,v)\mbox{d}u\mbox{d}v
 \end{split}
 \end{equation*}
 (note that we add $\delta$ on the first coordinate and subtract $\delta$ on the second).  
 The key property of $U^\delta$ is the following.
 
 \begin{lemma}
 For any $x,\,y>2\delta$ and $h,\,k\in \R$ we have
\begin{eqnarray}\label{conv}
 \left[U_{xx}^\delta(x,y)+\frac{(d-1)U_{x}^\delta(x,y)}{x}\right]h^2 &+&2U_{xy}^\delta(x,y)hk \nonumber\\
& +&\left[U_{yy}^\delta(x,y)+\frac{(d-1)U_{y}^\delta(x,y)}{y}\right]k^2\nonumber \\\nonumber\\
 &\leq & w(x,y)\cdot(h^2-k^2),
\end{eqnarray}
where
$$ w(x,y)=\frac{1}{2}\int_{[-1,1]^2} (L-R)(x+\delta-\delta u,y-\delta-\delta v)\psi(u,v) \mbox{d}u\mbox{d}v\leq 0.$$
 \end{lemma}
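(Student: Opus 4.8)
The plan is to derive the mollified inequality \eqref{conv} directly from the pointwise estimates of Lemma \ref{propUV} by exploiting the convolution structure. First I would observe that all differentiation commutes with the mollification: since $\psi$ is smooth and compactly supported, $U^\delta$ is of class $C^\infty$ on $[2\delta,\infty)^2$ and $\partial^\alpha U^\delta(x,y)=\int_{[-1,1]^2}(\partial^\alpha U)(x+\delta-\delta u,\,y-\delta-\delta v)\psi(u,v)\,\mathrm{d}u\,\mathrm{d}v$ for every multi-index $\alpha$, the integrand being defined for a.e.\ $(u,v)$ since $U$ is $C^2$ off a negligible curve. Writing $(x',y')=(x+\delta-\delta u,\,y-\delta-\delta v)$, the crucial point is that the shift keeps us inside the quadrant \emph{and strictly increases} the first coordinate and \emph{decreases} the second relative to $(x,y)$; but in fact for the algebraic manipulation all that matters is $x',y'>0$, which holds because $x,y>2\delta$ and $|u|,|v|\le 1$.

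Next I would write the left-hand side of \eqref{conv} as an integral against $\psi(u,v)$ of the expression
\[
\Big[U_{xx}(x',y')+\tfrac{(d-1)U_x(x',y')}{x}\Big]h^2+2U_{xy}(x',y')hk+\Big[U_{yy}(x',y')+\tfrac{(d-1)U_y(x',y')}{y}\Big]k^2,
\]
and the point to stress is that this is \emph{not} quite $L(x',y')h^2+2U_{xy}(x',y')hk+R(x',y')k^2$ because the first-order terms are divided by $x$ and $y$ rather than by $x'$ and $y'$. Here I would use \eqref{part3}: $U_x(x',y')\le 0$ and $U_y(x',y')\ge 0$, together with the fact that $x'\le x$ (so $1/x\le 1/x'$, hence $(d-1)U_x(x',y')/x\ge (d-1)U_x(x',y')/x'$) and $y'\ge y$ (so $1/y\ge 1/y'$, hence $(d-1)U_y(x',y')/y\ge (d-1)U_y(x',y')/y'$). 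Thus replacing $1/x$ by $1/x'$ and $1/y$ by $1/y'$ can only decrease each bracket when multiplied by the nonnegative quantities $h^2$ and $k^2$; consequently the displayed expression is at most $L(x',y')h^2+2U_{xy}(x',y')hk+R(x',y')k^2$.

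Now I would bound this quadratic form in $(h,k)$. Write it as $\tfrac12(L+R-2U_{xy})(h^2+k^2)+\tfrac12(L+R+2U_{xy})\cdot\text{(cross structure)}$ — more cleanly, I would split it as
\[
L h^2+2U_{xy}hk+Rk^2=\tfrac12(L+R)(h^2+k^2)+\tfrac12(L-R)(h^2-k^2)+2U_{xy}hk,
\]
and then use $(L+R)(h^2+k^2)+4U_{xy}hk\le (L+R-2U_{xy})(h^2-k^2)\le 0$: the first inequality because $(h^2+k^2)+\frac{4U_{xy}hk}{L+R}$ — no, better to argue directly that since $L+R-2U_{xy}\le 0$ by \eqref{part1} and $U_{xy}\le 0$ by \eqref{part2}, one has $(L+R)(h^2+k^2)+4U_{xy}hk=(L+R-2U_{xy})(h^2+k^2)+2U_{xy}(h+k)^2\le 0$. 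Hence $Lh^2+2U_{xy}hk+Rk^2\le \tfrac12(L-R)(h^2-k^2)$, and since $L-R\le 0$ by \eqref{part1.5} this last quantity is $\le 0$; but we keep it as the bound $\tfrac12(L-R)(x',y')(h^2-k^2)$. Integrating against $\psi$ yields exactly $w(x,y)(h^2-k^2)$, and $w(x,y)\le 0$ because $L-R\le 0$ pointwise and $\psi\ge 0$.

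The main obstacle is the bookkeeping in the second paragraph: one must be careful that the first-order terms in the definition of $L,R$ are normalized by the \emph{unshifted} variables $x,y$, which is precisely why the mollification was designed to shift the first coordinate up by $\delta$ and the second down by $\delta$ — this sign choice is what makes the monotonicity comparison $1/x\le 1/x'$, $1/y\ge 1/y'$ work in tandem with the sign information \eqref{part3} on $U_x,U_y$. Everything else is the elementary quadratic-form estimate, which uses \eqref{part1}, \eqref{part1.5}, \eqref{part2} verbatim, and the final nonpositivity of $w$ is immediate.
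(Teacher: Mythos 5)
Your proposal follows the same strategy as the paper: commute differentiation with the mollifier, use the signs of $U_x$ and $U_y$ from \eqref{part3} together with the direction of the shift to pass from the normalizations $1/x,\,1/y$ to $1/x',\,1/y'$, and then finish with the quadratic-form estimate built from \eqref{part1}, \eqref{part1.5}, \eqref{part2}. Your way of handling the cross term (writing $(L+R)(h^2+k^2)+4U_{xy}hk=(L+R-2U_{xy})(h^2+k^2)+2U_{xy}(h+k)^2$) is a clean alternative to the paper's bound $|2U_{xy}^\delta hk|\le -\tfrac{h^2+k^2}{2}\int(L+R)\psi$, but it uses the same two lemmas and is not a different route.

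There is, however, a sign slip you need to fix in the bookkeeping step. Since $|u|,|v|\le 1$, the shifted point $x'=x+\delta-\delta u$ satisfies $x'\ge x$ (not $x'\le x$), and $y'=y-\delta-\delta v$ satisfies $y'\le y$ (not $y'\ge y$). The correct chain is therefore $1/x\ge 1/x'$ and $U_x(x',y')\le 0$, hence $U_x(x',y')/x\le U_x(x',y')/x'$; similarly $1/y\le 1/y'$ and $U_y(x',y')\ge 0$ give $U_y(x',y')/y\le U_y(x',y')/y'$. These are what make the displayed bracketed expression $\le L(x',y')h^2+2U_{xy}(x',y')hk+R(x',y')k^2$. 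As written, your inequalities $x'\le x$, $y'\ge y$ lead to $U_x/x\ge U_x/x'$ and $U_y/y\ge U_y/y'$, which point the wrong way, and the subsequent conclusion "is at most $L h^2+\cdots$" silently flips the sign again. The final line of your argument also restates the comparisons backwards ($1/x\le 1/x'$, $1/y\ge 1/y'$). The underlying idea is clearly the right one — the asymmetric shift $+\delta$ in $x$ and $-\delta$ in $y$ is exactly chosen so that \eqref{part3} makes both substitutions go the favorable direction — so this is a local correction, not a gap in the approach.
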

\begin{proof}
Since $U$ is of class $C^1$,  integration by parts yields
\begin{equation*}
\begin{split}
 U^{\delta}_{x}(x,y)&=\int_{[-1,1]^2} U_{x}(x+\delta-\delta u,y-\delta-\delta v)\psi(u,v)\mbox{d}u\mbox{d}v
\end{split}
\end{equation*}
for all $x,\,y>2\delta$, 
and similar identities hold for $U^{\delta}_{y}$, $U^{\delta}_{xx}$, $U^{\delta}_{xy}$ and $U^{\delta}_{yy}$. Let $h,\,k$ be two real numbers. By \eqref{part1} and \eqref{part2}, $L+R$ is nonpositive and
\begin{equation}\label{aux1}
\begin{split}
|2U_{xy}^{\delta}(x,y)&hk|\\
&\leq -|hk|\int_{[-1,1]^2} (L+R)(x+\delta-\delta u,y-\delta-\delta v)\psi(u,v) \mbox{d}u\mbox{d}v\\
&\leq -\frac{h^2+k^2}2\int_{[-1,1]^2} (L+R)(x+\delta-\delta u,y-\delta-\delta v)\psi(u,v) \mbox{d}u\mbox{d}v.
\end{split}
\end{equation}
Next, by virtue of \eqref{part3}, we have
\begin{equation*}
\begin{split}
 \frac{U^{\delta}_{x}(x,y)}{x}&\leq \int_{[-1,1]^2} \frac{U_{x}(x+\delta-\delta u,y-\delta-\delta v)}{x+\delta-\delta u}\psi(u,v)\mbox{d}u\mbox{d}v,\\\\
 \frac{U^{\delta}_{y}(x,y)}{y}&\leq \int_{[-1,1]^2} \frac{U_{y}(x+\delta-\delta u,y-\delta-\delta v)}{y-\delta-\delta v}\psi(u,v)\mbox{d}u\mbox{d}v,
 \end{split}
 \end{equation*}
 which gives
\begin{equation}\label{aux2}
\begin{split}
U_{xx}^{\delta}(x,y)+\frac{(d-1)U_{x}^{\delta}(x,y)}{x}&\leq \int_{[-1,1]^2} L(x+\delta-\delta u,y-\delta-\delta v)\psi(u,v) \mbox{d}u\mbox{d}v,\\\\
U_{yy}^{\delta}(x,y)+\frac{(d-1)U_{y}^{\delta}(x,y)}{y} &\leq \int_{[-1,1]^2} R(x+\delta-\delta u,y-\delta-\delta v)\psi(u,v) \mbox{d}u\mbox{d}v.
\end{split}
\end{equation}
It suffices to combine \eqref{aux1} with \eqref{aux2} to obtain \eqref{conv}. The inequality $w\leq 0$ follows immediately from \eqref{part1.5}.
\end{proof}

Now we are ready to establish the submartingale inequality of Theorem \ref{mainth}.

\begin{proof}[Proof of \eqref{mainin}] Of course, we may restrict ourselves to $X\in L^p$, since otherwise there is nothing to prove.  
Fix $\delta\in (0,1/2)$ and a large positive integer $N$. Consider the stopping time $\tau=\tau^K=\inf\{t\geq 0:X_t+Y_t+B_t\geq K\}$ and introduce the process 
$Z=Z^{K,\delta}=(Z_t)_{t\geq 0}$ by setting
$$ Z_t=\begin{cases}
(2\delta+X_{\tau\wedge t},2\delta+Y_{\tau\wedge t})& \mbox{if }\tau>0,\\
(0,0) & \mbox{if }\tau=0.
\end{cases}$$
The function $U^{\delta}$ is of class $C^\infty$, so applying It\^o formula yields
\begin{equation}\label{ito}
 U^\delta(Z_t)=I_0+I_1+I_2+\frac{1}{2}I_3,
\end{equation}
where
\begin{equation*}
\begin{split}
I_0&=U^\delta(Z_0),\\
I_1&=\int_{0+}^t U^{\delta}_{x}(Z_s)\mbox{d}M_s+\int_{0+}^t U^{\delta}_{y}(Z_s)\mbox{d}N_s,\\
I_2&=\int_{0+}^t U^{\delta}_{x}(Z_s)\mbox{d}A_s+\int_{0+}^t U^{\delta}_{y}(Z_s)\mbox{d}B_s,\\
I_3&=\int_{0+}^t U^{\delta}_{xx}(Z_s)\mbox{d}[X,X]_s+2\int_{0+}^t U^{\delta}_{xy}(Z_s)\mbox{d}[X,Y]_s+\int_{0+}^t U^{\delta}_{yy}(Z_s)\mbox{d}[Y,Y]_s.
\end{split}
\end{equation*}
We may and do assume that both stochastic integrals in $I_1$ are martingales, passing to localizing sequences $(\tau_n)_{n\geq 0}$ of stopping times if necessary (and repeating the reasoning with $\tau$ replaced by $\tau\wedge \tau_n$). 
Consequently, $\E I_1=0$. To deal with $I_2$, note that by \eqref{part3} and the assumption \eqref{assumpt}, we have
$$ \int_{0+}^t U^{\delta}_{x}(Z_s)\mbox{d}A_s\leq \int_{0+}^t \frac{U^{\delta}_{x}(Z_s)}{2\delta+X_s}X_s\mbox{d}A_s\leq \int_{0+}^t \frac{U^{\delta}_{x}(Z_s)}{2\delta+X_s}\frac{d-1}{2}\mbox{d}[X,X]_s$$
and, similarly,
$$ \int_{0+}^t U^{\delta}_{y}(Z_s)\mbox{d}B_s\leq \int_{0+}^t \frac{U^{\delta}_{y}(Z_s)}{2\delta+Y_s}\frac{d-1}{2}\mbox{d}[Y,Y]_s+2\delta\int_{0+}^t \frac{U^{\delta}_{y}(Z_s)}{2\delta+Y_s}\mbox{d}B_s.$$
Hence $I_2+I_3/2\leq J_1/2+J_2$, where
\begin{equation*}
\begin{split}
J_1&=\int_{0+}^t \left[U^{\delta}_{xx}(Z_s)+\frac{(d-1)U^{\delta}_{x}(Z_s)}{2\delta+X_s}\right]\mbox{d}[X,X]_s\\
&\quad +
2\int_{0+}^t U^{\delta}_{xy}(Z_s)\mbox{d}[X,Y]_s+\int_{0+}^t \left[U^{\delta}_{yy}(Z_s)+\frac{(d-1)U^{\delta}_{y}(Z_s)}{2\delta+Y_s}\right]\mbox{d}[Y,Y]_s,\\
J_2&=\int_{0+}^t \frac{2\delta U^{\delta}_{y}(Z_s)}{2\delta+Y_s}\mbox{d}B_s.
\end{split}
\end{equation*}
Let us approximate the integrals in $J_1$ by discrete sums and use \eqref{conv} to obtain
$$ J_1\leq \int_{0+}^t w(Z_s)\mbox{d}([X,X]_s-[Y,Y]_s)\leq 0,$$
by virtue of the differential subordination and the fact that $w$ is nonpositive. We refer the reader to Wang \cite[p.~533]{W} for a detailed explanation of this step. To deal with $J_2$, note that if $Y_s\geq \sqrt{\delta}$, then
$$ \frac{2\delta U^{\delta}_{y}(Z_s)}{2\delta+Y_s}\leq 2\sqrt{\delta}\cdot \sup_{(0,K+2]\times (0,K+2]} U_{y},$$
while for $Y_s<\sqrt{\delta},$
$$ \frac{2\delta U^{\delta}_{y}(Z_s)}{2\delta+Y_s}\leq \sup_{(0,K+2]\times (0,2\delta]} U_{y}.$$
Since $\lim_{y\to 0}U_{y}(x,y)=0$ uniformly for $x\in (0,K+2]$, we see that the integrand in $J_2$ converges to $0$ as $\delta\to 0$. Hence so does $J_2$, since $B_t\leq K$ by the definition of $\tau$. 
Summarizing, if we take expectation of both sides of \eqref{ito}, we obtain
$$ \E V^\delta(Z_t)\leq \E U^\delta (Z_t)\leq \E U^\delta(Z_0)+\kappa(\delta),$$
with $\kappa(\delta)=o(1)$ as $\delta\to 0$. We have $|Z_t|\leq K+4\delta\leq K+2$ and the functions $U$, $V$ are continuous.  Thus, letting $\delta\to 0$ and applying Lebesgue's dominated convergence theorem, we get $ \E V(X_{\tau\wedge t},Y_{\tau\wedge t})\leq \E U(X_0,Y_0).$ However, as one easily checks, we have $U(x,y)\leq 0$ for $y\leq x$: this is equivalent to $z_0\geq 0$ for $p\leq 2$ and to $z_0\leq 0$ for remaining $p$. Consequently, $\E U(X_0,Y_0)\leq 0$ in view of the differential subordination and hence
$$ \E Y_{\tau\wedge t}^p\leq C_{p,d}^p\E X_{\tau\wedge t}^p\leq C_{p,d}^p||X||_p^p.$$
It suffices to let $K\to \infty$ and then $t\to\infty$ to complete the proof, by virtue of Lebesgue's monotone convergence theorem.
\end{proof} 

\begin{proof}[Proof of \eqref{orthogin} and \eqref{besselin}] This follows immediately from \eqref{mainin}. See Introduction to see how analytic martingales and stopped Bessel processes are related to nonnegative submartingales satisfying \eqref{assumpt}.
\end{proof}

\begin{proof}[The sharpness of \eqref{mainin}, \eqref{orthogin} and \eqref{besselin}.] It suffices to show that the constant $C_{p,d}$ is the best in \eqref{besselin}. We shall restrict ourselves to the stopped Bessel processes $R$, $S$ of the form \eqref{bessels}, starting from $1$. First, suppose that $p<2$. We have $z_0>0$ by Lemma \ref{lem1} (iii). Fix $a\in (0,z_0)$ and recall $\tau^a$, the stopping time defined in \eqref{deftau}. We have shown in Lemma \ref{roots} that $\tau^a\in L^{p/2}$ and that \eqref{sharpness} is valid. Therefore, letting $a\uparrow z_0$ gives the optimality of $C_{p,d}$. The same reasoning proves that \eqref{besselin} and hence also \eqref{mainin} do not hold with any finite constant when $p+d\leq 2$. If  $p=2$, then $C_{p,d}=1$, so the choice $\tau=0$ gives equality in \eqref{besselin}. Finally, suppose that $p>2$. We will switch the roles $R$ and $S$, and prove that for any $C<C_{p,d}$ there is a stopping time such that $||R_\tau||_p\geq C||S_\tau||_p$. Let $-1<b<a<z_0$. We make use of the following two-step procedure: first we let $(R,S)$ drop to the line $y=\frac{1+b}{1-b}x$ and then let it rise to the line $y=\frac{1+a}{1-a}x$. To be more precise, observe that $\mathbb{P}(\tau^b\leq 1)>0$: the process $((R_t,S_t))_{t\in [0,1]}$ reaches the line $y=\frac{1+b}{1-b}x$ with positive probability. Define $\tau=1$ if $\tau^b>1$ and
$$ \tau=\inf\left\{t>\tau^b: S_t=\frac{1-a}{1+a}R_t\right\}$$
if $\tau^b\leq 1$. By the strong Markov property and the reasoning from the proof of Lemma \ref{roots}, we have
$$ \E (S_{\tau}^p|\tau^b\leq 1)=\left(\frac{1+a}{1-a}\right)^p \E (R_{\tau}^p|\tau^b\leq 1)$$
and the expectations tend to $\infty$ as $a\to z_0$. On the other hand, we have 
$$\E S_{\tau}^p1_{\{\tau^b>1\}}=\E S_1^p1_{\{\tau^b>1\}}\leq \E S_1^p$$
and therefore
\begin{equation*}
\begin{split}
 ||R_{\tau}||_p&\geq \frac{1-a}{1+a}||S_{\tau}1_{\{\tau^b\leq 1\}}||_p\\
 &\geq \frac{1-a}{1+a}||S_{\tau}||_p-\frac{1-a}{1+a}||S_{\tau}1_{\{\tau^b> 1\}}||_p\\
 &\geq \frac{1-a}{1+a}||S_{\tau}||_p-\frac{1-a}{1+a}||S_1||_p.
 \end{split}
 \end{equation*}
 Now fix $\e>0$. If $a$ is sufficiently close to $z_0$, then
 $$ \frac{1-a}{1+a}||S_1||_p\leq \e||S_{\tau}||_p$$
 and hence
 $$ ||R_\tau||_p\geq \left(\frac{1-a}{1+a}-\e\right)||S_\tau||_p.$$
This proves the optimality of the constant $C_{p,d}$.
\end{proof}

\section{Analytic Functions on $\bC$ and Smooth functions on $\R^n$}

As discussed in the introduction, the inequality for conformal (analytic) martingales in this paper and those in \cite{BJV1, BJV2} are motivated by the martingale study of the norm of the Beurling-Ahlfors operator. However, conformal martingales have been extensively studied in the literature (see \cite{GetSha} for example) as they arise naturally from the fundamental theorem of P. L\'evy which asserts that the composition of 2-dimensional Brownian motion with an analytic function in the plane is a time change of 2-dimensional Brownian motion. We recall here the classical setting in the unit disc.  Let $D=\{z\in \bC: |z|<1\}$ be the unit disc in the plane and suppose that $F:D\to \bC$ is an analytic function with the representation $F(z)=u(z)+iv(z)$ where $u$ and $v$ are conjugate harmonic functions. If $B$ is Brownian motion in the disc and $\tau_D=\inf\{t>0: B_t\notin D\}$, then 

\begin{equation}\label{discconf}
X_t=F(B_{\tau_D\wedge t})=u(B_{\tau_D\wedge t}) +iv(B_{\tau_D\wedge t})
\end{equation}
 is a conformal martingale in $\R^2$ (identified here with $\bC$).  This follows directly from the It\^o formula); see \cite{Dur} or \cite[p.~177]{RY}.  The quadratic variation process of the martingale $X$ is given by
\begin{equation}
[X,X]_t=\int_0^{\tau_D\wedge t}|\nabla u(B_s)|^2 ds +\int_0^{\tau_D\wedge t}|\nabla v(B_s)|^2 ds=2\int_0^{\tau_D\wedge t}|\nabla u(B_s)|^2 ds,
\end{equation}
where we used the fact that $|\nabla v|=|\nabla u|$, by the Cauchy--Riemann equations.  Of course, $[X, X]$ here can also be written simply in terms of $|F'|^2$ rather than $|\nabla u|^2$.

 For any $0<p<\infty$, the classical $H_p$-norm of the analytic  function is defined by 
 \begin{equation}\label{h_p-norm}
\|F\|_{H_p}=\left[\sup_{0<r<1}\frac{1}{2\pi}\int_0^{2\pi}|F(re^{i\theta})|^p\, d\theta\right]^{1/p}. 
\end{equation}
We have the following which is an immediate consequence  of Corollary \ref{conf-cor}. 

\begin{theorem}\label{application1} If $F_1(z)=u_1(z)+iv_1(z)$ and $F_2(z)=u_2(z)+iv_2(z)$ are analytic functions in the unit disc $D$ with $|F_2(0)|\leq |F_1(0)|$ and $|F_2'(z)|\leq |F_1'(z)|$ for all $z\in D$, then for any $0<p<\infty$, 
\begin{equation}
\|F_2\|_{H_p}\leq C_{p, 2}\|F_1\|_{H_p}. 
\end{equation}
\end{theorem}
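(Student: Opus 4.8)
The plan is to realize $F_1$ and $F_2$ as a pair of conformal martingales to which Corollary \ref{conf-cor} (with $d=2$) directly applies, and then to translate the conclusion back into the language of $H_p$ norms. First I would fix Brownian motion $B$ in the disc $D$ started at $0$, let $\tau_D=\inf\{t>0:B_t\notin D\}$, and set
$$
X_t=F_1(B_{\tau_D\wedge t}),\qquad Y_t=F_2(B_{\tau_D\wedge t}),
$$
which, as recalled in \eqref{discconf}, are conformal martingales in $\R^2\cong\bC$. The hypotheses $|F_2(0)|\le|F_1(0)|$ and $|F_2'(z)|\le|F_1'(z)|$ on $D$ give exactly what is needed: $|Y_0|=|F_2(0)|\le|F_1(0)|=|X_0|$, and from the quadratic variation formula
$$
[X,X]_t=2\int_0^{\tau_D\wedge t}|F_1'(B_s)|^2\,\mbox{d}s,\qquad [Y,Y]_t=2\int_0^{\tau_D\wedge t}|F_2'(B_s)|^2\,\mbox{d}s,
$$
the process $[X,X]_t-[Y,Y]_t=2\int_0^{\tau_D\wedge t}\big(|F_1'(B_s)|^2-|F_2'(B_s)|^2\big)\,\mbox{d}s$ is nondecreasing and nonnegative, i.e. $Y$ is differentially subordinate to $X$. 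Hence Corollary \ref{conf-cor} yields $\|Y\|_p\le C_{p,2}\|X\|_p$ for all $0<p<\infty$.

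The second step is to identify $\|X\|_p$ and $\|Y\|_p$ with $\|F_1\|_{H_p}$ and $\|F_2\|_{H_p}$. The natural route is the conformal invariance of harmonic measure: for a bounded stopping time, or more conveniently by working with the process $X_t=F_1(B_{\tau_D\wedge t})$, the random variable $X_{\tau_D}$ has the distribution $F_1$ evaluated at a uniformly distributed boundary point, so that $\E|X_{\tau_D}|^p=\frac1{2\pi}\int_0^{2\pi}|F_1(e^{i\theta})|^p\,d\theta$. Combined with the fact that $t\mapsto\E|X_{\tau_D\wedge t}|^p$ is nondecreasing (since $|X|^p$ is a submartingale when $p\ge1$; for $0<p<1$ one argues via the subharmonicity of $|F_1|^p$ off the zeros, or simply passes to $|F_1|^2+\e$ and lets $\e\to0$), this gives $\|X\|_p=\sup_t\|X_{\tau_D\wedge t}\|_p=\big(\frac1{2\pi}\int_0^{2\pi}|F_1(e^{i\theta})|^p\,d\theta\big)^{1/p}$, and the right side equals $\|F_1\|_{H_p}$ by the standard fact that the $H_p$-mean is nondecreasing in $r$ with boundary limit given by the radial boundary values. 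The same identity holds for $Y$ and $F_2$. Putting the two steps together gives $\|F_2\|_{H_p}\le C_{p,2}\|F_1\|_{H_p}$.

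I expect the main technical care to be concentrated in the identification $\|X\|_p=\|F_1\|_{H_p}$, particularly for the range $0<p<1$ where $|F_1|^p$ is not smooth and the submartingale property is not immediate; one must either invoke the subharmonicity of $|F_1|^p$ (valid on all of $\bC$ for subharmonic functions raised to a power, since $\log|F_1|$ is subharmonic) or approximate. A minor subtlety is the case where $F_1$ is unbounded or has boundary values only in the $H_p$ sense rather than pointwise, which is handled by first proving the inequality for $F_1,F_2$ analytic in a slightly larger disc (i.e. for $r<r_0<1$, apply the argument inside $r_0D$ and let $r_0\uparrow1$) and using the monotone convergence of the $H_p$-means. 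None of this is deep; the real content is entirely in Corollary \ref{conf-cor}, and Theorem \ref{application1} is, as stated, an immediate consequence once the dictionary between conformal martingales and analytic functions is spelled out.
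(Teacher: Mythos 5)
Your argument is correct and is exactly what the paper means by calling Theorem \ref{application1} an immediate consequence of Corollary \ref{conf-cor}: compose $F_1,F_2$ with stopped planar Brownian motion to produce conformal martingales, verify that $|F_2(0)|\le|F_1(0)|$ and $|F_2'|\le|F_1'|$ give differential subordination of $Y$ to $X$, apply the corollary, and identify $\|X\|_p$ with $\|F_1\|_{H_p}$ via the exit distributions on the circles $|z|=r$. Your care with the $0<p<1$ range (subharmonicity of $|F|^p$) and with passing $r_0\uparrow 1$ correctly supplies the technical points the paper leaves tacit.
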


\begin{remark}The  very interesting question arises here as to whether the constant $C_{p,2}$ is optimal. Unfortunately, we have not been able to answer it, however, we strongly believe that this inequality is \emph{not} sharp, except for the trivial case $p=2$.
\end{remark}

One may replace the unit disc above with any domain in the complex plane and modify the definition of the $H_p$ norm to be with respect to the harmonic measure and obtain a similar inequality.  We leave this to the reader.  Here we state a more general inequality for smooth functions in $\R^d$ satisfying a subordination condition which arises from the submartingale condition  \eqref{assumpt}. Suppose that $D$ is an open subset of $\R^n$, where $n$ is a
fixed positive integer, and assume that $0\in D$. Let $D_0$ be a bounded subdomain of $D$ with $0\in D_0$
and $\partial D_0\subset D$. Let $\mu_{D_0}$ denote the harmonic measure
on $\partial D_0$ with respect to $0$. Consider two real-valued $C^2$ functions
$u$, $v$ on $D$, satisfying
\begin{equation}\label{A0}
|v(0)|\leq |u(0)|.
\end{equation}
Following \cite{B0.9}, $v$ is
differentially subordinate to $u$ if
\begin{equation}\label{A1} 
|\nabla v(x)|\leq |\nabla u(x)|\quad \mbox{ for }x\in D.
\end{equation}
Let us assume further that there is $d>1$ such that
\begin{equation}\label{A2}
u(x)\Delta u(x)\geq (d-1)|\nabla u(x)|^2 \quad\mbox{and}\quad  v(x)\Delta v(x)\leq (d-1)|\nabla v(x)|^2  
\end{equation}
for all $x\in D$. In what follows, 
$$ ||u||_p=\sup\left[\int_{\partial D_0}|u(x)|^p \mu_{D_0}(\mbox{d}x)\right]^{1/p},$$
where the supremum is taken over all $D_0$ as above. 

The condition \eqref{A2} appears naturally while studying a Stein-Weiss system of harmonic functions. Let $u_j$, $j=0,\,1,\,2,\,\ldots,\,n$, be harmonic functions given on an open subset of $\R\times \R^n$,  taking values in a certain separable Hilbert space. Assume that they satisfy the generalized Cauchy-Riemann equations
$$ \sum_{j=0}^n \frac{\partial u_j}{\partial x_j}=0\qquad \mbox{and}\qquad \frac{\partial u_j}{\partial x_k}=\frac{\partial u_k}{\partial x_j}$$
for all $j,\,k\in \{0,\,1,\,2,\,\ldots,\,n\}$. Let $F$ stand for the vector $(u_0,\,u_1,\,\ldots,\,u_n)$ and fix $q>(n-1)/n$. Then the function $u=|F|^q$ satisfies the left inequality in \eqref{A2} with $d=2-\frac{n-1}{nq}>1.$ To see this, we easily compute that
$$ |\nabla |F|^q|^2=q^2|F|^{2q-4}\sum_{j=0}^n\left(\frac{\partial F}{\partial x_j}\cdot F\right)^2$$
and
$$ \Delta |F|^q=q|F|^{q-4}\left((q-2)\sum_{j=0}^n \left(\frac{\partial F}{\partial x_j}\cdot F\right)^2+|F|^2|\nabla F|^2\right).$$
It suffices to apply the estimate
$$ \sum_{j=0}^n\left(\frac{\partial F}{\partial x_j}\cdot F\right)^2 \leq \frac{n}{n+1}|F|^2|\nabla F|^2$$
(see page 219 in Stein \cite{S}) to obtain
$$ |F|^q \Delta |F|^q\geq \left(1-\frac{n-1}{nq}\right)|\nabla |F|^q|^2. $$

\begin{theorem}
If $u$, $v$ are nonnegative subharmonic functions satisfying \eqref{A0}, \eqref{A1} and \eqref{A2}, then for any $(2-d)_+<p<\infty$,
$$ ||v||_p\leq C_{p,d} ||u||_p.$$
\end{theorem}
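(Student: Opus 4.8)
The plan is to reduce to Theorem \ref{mainth} by composing $u$ and $v$ with Brownian motion, in exactly the way analytic martingales and stopped Bessel processes were obtained from nonnegative submartingales in the Introduction. Since $\|u\|_p$ is a supremum over the bounded subdomains $D_0$, I would fix one such $D_0$ and first prove the \emph{per-domain} estimate
\begin{equation}\label{pd}
\int_{\partial D_0}|v|^p\,\mu_{D_0}(\mbox{d}x)\leq C_{p,d}^p\int_{\partial D_0}|u|^p\,\mu_{D_0}(\mbox{d}x),
\end{equation}
after which taking the supremum over all admissible $D_0$ on both sides gives the theorem. Here one uses that $u$ and $v$, being $C^2$ on $D\supset\overline{D_0}$, are bounded on the compact set $\overline{D_0}$, and that $\mu_{D_0}$ is a probability measure.

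To prove \eqref{pd}, let $W$ be $n$-dimensional Brownian motion started at $0$, filtered by its natural filtration, and set $\tau=\inf\{t\geq 0:W_t\notin D_0\}$, which is finite almost surely since $\overline{D_0}$ is compact. Put $X_t=u(W_{\tau\wedge t})$ and $Y_t=v(W_{\tau\wedge t})$. Since $u,v$ are nonnegative and subharmonic, $X,Y$ are nonnegative, bounded, continuous-path submartingales, and the It\^o formula gives their Doob--Meyer decompositions \eqref{DoobMeyer}: the martingale parts are $\int_{0+}^{\tau\wedge t}\nabla u(W_s)\cdot\mbox{d}W_s$ and $\int_{0+}^{\tau\wedge t}\nabla v(W_s)\cdot\mbox{d}W_s$, and the (predictable, nondecreasing) finite-variation parts are $A_t=\tfrac12\int_{0+}^{\tau\wedge t}\Delta u(W_s)\,\mbox{d}s$ and $B_t=\tfrac12\int_{0+}^{\tau\wedge t}\Delta v(W_s)\,\mbox{d}s$; consequently $[X,X]_t=\int_{0+}^{\tau\wedge t}|\nabla u(W_s)|^2\mbox{d}s$ and $[Y,Y]_t=\int_{0+}^{\tau\wedge t}|\nabla v(W_s)|^2\mbox{d}s$.

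The next step is to verify that $X,Y$ satisfy precisely the hypotheses of Theorem \ref{mainth}. The differential subordination of $Y$ to $X$ is nothing but \eqref{A1} read at the level of quadratic variations (so that $[X,X]_t-[Y,Y]_t$ is nondecreasing and nonnegative), together with $|v(0)|\leq|u(0)|$, which is \eqref{A0}. The condition \eqref{assumpt} comes straight from \eqref{A2}: the left inequality there gives
$$X_t\,\mbox{d}A_t=\tfrac12\,u(W_{\tau\wedge t})\Delta u(W_{\tau\wedge t})\,\mbox{d}(\tau\wedge t)\geq\tfrac{d-1}{2}\,|\nabla u(W_{\tau\wedge t})|^2\,\mbox{d}(\tau\wedge t)=\tfrac{d-1}{2}\,\mbox{d}[X,X]_t,$$
and the right inequality there gives $Y_t\,\mbox{d}B_t\leq\tfrac{d-1}{2}\,\mbox{d}[Y,Y]_t$ in the same fashion.

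Finally one invokes Theorem \ref{mainth}, and here lies the only genuine subtlety, hence the step I would be most careful with: when $0<p<1$ the functions $|u|^p,|v|^p$ need not be subharmonic, so the quantities $\sup_t\|X_t\|_p$, $\sup_t\|Y_t\|_p$ in $\|Y\|_p\leq C_{p,d}\|X\|_p$ need not coincide with the boundary integrals in \eqref{pd}. To sidestep this I would not use Theorem \ref{mainth} as a black box but re-run the argument in the proof of \eqref{mainin} for the present $X,Y$; at its penultimate step that argument produces $\E\,Y_{\tau_K\wedge t}^p\leq C_{p,d}^p\,\E\,X_{\tau_K\wedge t}^p$ for the truncation times $\tau_K$ used there. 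Because $X,Y$ are bounded (by $\sup_{\overline{D_0}}|u|+\sup_{\overline{D_0}}|v|$) and the finite-variation part $B$ of $Y$ is nondecreasing and dominated by a constant multiple of $\tau<\infty$, one has $\tau_K\to\infty$ almost surely; letting $K\to\infty$ and then $t\to\infty$, the dominated convergence theorem yields $\E\,v(W_{\tau})^p\leq C_{p,d}^p\,\E\,u(W_{\tau})^p$, which is \eqref{pd} since $\mu_{D_0}$ is the law of $W_{\tau}$. Taking the supremum over $D_0$ completes the proof; everything apart from this last passage is a routine translation of \eqref{A0}--\eqref{A2} into the submartingale language of Theorem \ref{mainth} via the It\^o formula.
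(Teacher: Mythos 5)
Your proposal matches the paper's proof in all essentials: compose $u,v$ with Brownian motion stopped at $\tau_{D_0}=\inf\{t:B_t\notin D_0\}$, read off the Doob--Meyer pieces from It\^o's formula, check that \eqref{A0}--\eqref{A1} give differential subordination and \eqref{A2} gives \eqref{assumpt}, invoke Theorem~\ref{mainth}, and take the supremum over $D_0$. The only place you diverge is your caution about $0<p<1$: you worry that $\sup_t\E X_t^p$ could exceed $\E\,u(B_{\tau_{D_0}})^p$, so instead of quoting Theorem~\ref{mainth} as a black box you re-run the last passage of the proof of \eqref{mainin} and close with bounded convergence on the stopped inequality $\E Y^p_{\tau_K\wedge t}\leq C_{p,d}^p\,\E X^p_{\tau_K\wedge t}$. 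That is correct and arguably the safest route. It is, however, not forced on you: on $\{u>0\}$ one has
$$\Delta(u^p)=p\,u^{p-2}\bigl[u\,\Delta u+(p-1)|\nabla u|^2\bigr]\geq p(p+d-2)\,u^{p-2}|\nabla u|^2\geq 0$$
by the left half of \eqref{A2} together with $p+d>2$, and the usual regularization (replace $u$ by $u+\e$ and let $\e\downarrow 0$) shows $u^p$ is subharmonic for \emph{every} admissible $p$, not just $p\geq 1$. Hence $t\mapsto\E X_t^p$ is nondecreasing, $\|X\|_p^p=\lim_{t\to\infty}\E X_t^p=\E\,u(B_{\tau_{D_0}})^p$, and the black-box appeal to Theorem~\ref{mainth}, as in the paper, already yields the per-domain estimate. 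Either way the argument is complete; your version just spells out a point the paper leaves implicit.
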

\begin{proof}
Pick any $D_0$ as above. Obviously, we will be done if we show that
\begin{equation}\label{want}
\left[\int_{\partial D_0}|v(x)|^p \mu_{D_0}(\mbox{d}x)\right]^{1/p}\leq C_{p,d}\left[\int_{\partial D_0}|u(x)|^p \mu_{D_0}(\mbox{d}x)\right]^{1/p}.
\end{equation}
Let $B=(B_t)_{t\geq 0}$ be a Brownian motion in $\R^n$, starting at $0$, and let $\tau_{D_0}=\inf\{t\geq 0:B_t\notin D_0\}$. 
Define $X_t=u(B_{\tau_{D_0}\wedge t})$ and $Y_t=v(B_{\tau_{D_0}\wedge t})$ for $t\geq 0$.  From the It\^o formula we see that $X$, $Y$ are nonnegative submartingales with the corresponding Doob-Meyer decompositions given by
$$ X_t=u(0)+\int_{0+}^{\tau_{D_0}\wedge t} \nabla u(B_s)\cdot\mbox{d}B_s+\frac{1}{2}\int_{0+}^{\tau_{D_0}\wedge t} \Delta u(B_s) ds,$$
$$ Y_t=v(0)+\int_{0+}^{\tau_{D_0}\wedge t} \nabla v(B_s)\cdot\mbox{d}B_s+\frac{1}{2}\int_{0+}^{\tau_{D_0}\wedge t} \Delta v(B_s) ds.$$
Therefore, the assumptions \eqref{A0} and \eqref{A1} imply that $Y$ is differentially subordinate to $X$, while \eqref{A2} yields \eqref{assumpt}. Consequently, by \eqref{mainin}, we have
$$ ||v(B_{\tau_{D_0}})||_p\leq C_{p,d} ||u(B_{\tau_{D_0}})||_p,$$
which is equivalent to \eqref{want} since the distribution of $B_{\tau_{D_0}}$ is $\mu_{D_0}$. The proof is complete.
\end{proof}

\section*{Acknowledgment} The results were obtained when the second author was visiting Purdue University.  We also thank the anonymous referee for useful comments.


\begin{thebibliography}{99}

\bibitem{Ab}{M. Abramowitz and I. A. Stegun, {\it Handbook of mathematical functions with formulas,
graphs and mathematical tables}, Reprint of the 1972 edition, Dover Publications, Inc.,
New York, 1992.}



\bibitem{AstIwaMar} K. Astala, T. Iwaniec and G. Martin, \emph{Elliptic Partial Differential Equations and Quasiconformal Mappings in the Plane,} Princeton University Press, 2009. 

\bibitem{AstIwaPraSak} K. Astala, T. Iwaniec, I. Prause, E. Saksman, \emph{ Burkholder integrals, Morrey's problem and quasiconformal mappings}, Journal of the American Math. Soc., S 0894-0347(2011)00718-2
Electronically Publication, October 6, 2011.  


\bibitem{Ban1} R. Ba\~nuelos, \emph{The foundational inequalities of D. L. Burkholder and some of their ramifications}, 
To appear, Illinois Journal of Mathematics, Volume in honor of D.L. Burkholder.

\bibitem{BanJan} R. Ba{\~n}uelos and P. Janakiraman, \emph{$L\sp p$-bounds for the {B}eurling-{A}hlfors transform}, Trans. Amer. Math. Soc.,  \textbf{360} ({2008}), no. {7}, {3603--3612}. 


\bibitem{BMH}{R. Ba\~nuelos and P. J. M\'endez-Hernandez, \emph{Space-time Brownian motion and the Beurling-Ahlfors transform}, Indiana Univ. Math. J. \textbf{52} (2003), no. 4, 981--990.}
\bibitem{BW0}{R. Ba\~nuelos and G. Wang, \emph{Sharp inequalities for
martingales with applications to the Beurling-Ahlfors and Riesz
transforms}, Duke Math. J. \textbf{80} (1995), no. 3, 575--600.}
\bibitem{BW}{R. Ba\~nuelos and G. Wang, \emph{Orthogonal martingales under differential subordination and applications to Riesz transforms}, Illinois J. Math. \textbf{40} No. 4 (1996), pp. 678--691.} 

\bibitem{BJV1}{A. Borichev, P. Janakiraman and A. Volberg, \emph{Subordination by orthogonal martingales in $L^{p}$ and zeros of Laguerre polynomials}, arXiv:1012.0943.} 
\bibitem{BJV2}{A. Borichev, P. Janakiraman and A. Volberg, \emph{On Burkholder function for orthogonal martingales and zeros of Legendre polynomials}, Amer. Jour. Math. {\bf (to appear)}} 
\bibitem{B0}{D. L. Burkholder, \emph{Boundary value problems and sharp inequalities for martingale transforms}, 
Ann. Probab. \textbf{12} (1984), 647--702.}

\bibitem{Bur43}  D.L. Burkholder, \emph{A sharp and strict $L^p$-inequality for stochastic integrals,} Ann. Probab. {\bf 15} (1987 ), 268-273.

\bibitem{B-1.1}{D. L. Burkholder, {\it A proof of Pe\l czy\'nski's conjecture for the Haar system}, Studia Math. \textbf{91} (1988), 79--83.}
\bibitem{B22}{D. L. Burkholder, {\it Sharp inequalities for martingales and stochastic integrals}, Ast\'erisque \textbf{157--158} (1988), 75--94.}
\bibitem{B0.9}{{D. L. Burkholder}, {\it Differential subordination of harmonic functions and martingales}, Harmonic Analysis and Partial Differential Equations (El Escorial, 1987), Lecture Notes in Mathematics 1384 (1989), 1--23.}
\bibitem{B1}{D. L. Burkholder, \emph{Explorations in martingale theory and its applications}, \'Ecole d'Et\'e de Probabilit\'es de Saint-Flour XIX---1989, pp. 1--66, Lecture Notes in Math., 1464, Springer, Berlin, 1991.}
\bibitem{Dav}{B. Davis, {\it On the $L^p$ norms of stochastic integrals and other martingales}, Duke Math. J. \textbf{43} (1976), 697--704.}

\bibitem{Deb}{R. D. DeBlassie, {\it Stopping times of Bessel processes}, Ann. Probab. \textbf{15} (1987), 1044--1051.}
\bibitem{DM}{C. Dellacherie and P. A. Meyer, {\it Probabilities and Potential B: Theory of martingales}, North Holland, Amsterdam, 1982.}

\bibitem{Dur} R.Durrett, \emph{Brownian Motion and Martingales
in Analysis},
Wadsworth,  Belmont, CA, 1984.


\bibitem{G}{T. W. Gamelin, \emph{Uniform algebras and Jensen measures}, Cambridge University Press, London, 1978.}
\bibitem{GME}{S. Geiss, S. Montgomery-Smith and E. Saksman, \emph{On singular integral and martingale transforms}, Trans. Amer. Math. Soc. \textbf{362} No. 2 (2010), 553--575.}

\bibitem{GetSha} R.K.Getoor and M.J. Sharpe, \emph{Conformal martingales,} Invent. Math. {\bf 16} (1972), 271--308.

\bibitem{Iwa} T. Iwaniec, \emph{Extremal inequalities in Sobolev spaces and quasiconformal mappings,} Z. Anal. 
Anwendungen {\bf 1} (1982), 1--16.

\bibitem{Iwa3} T. Iwaniec, \emph{Nonlinear Cauchy-Riemann operators in ${\Bbb R}\sp n$,} Trans. Amer. Math.
Soc. {\bf 354} (2002), 1961--1995.

\bibitem{JanVol1} P. Janakiraman and A. Volberg, \emph{Subordination by orthogonal martingales in $L^p$, $1<p\leq 2$.} preprint. 

\bibitem{Jan1}{P. Janakiraman, \emph{Best weak-type $(p,p)$ constants, $1\leq p\leq 2$, for orthogonal harmonic functions and martingales}, Illinois J. Math. \textbf{48} no. 3 (2004),  909--924.}


\bibitem{NazTre1} F.L. Nazarov and S.R. Treil, \emph{The hunt for a Bellman function: applications to estimates for singular integral operators and to other classical problems of harmonic analysis,} St. Petersburg Math. J. {\bf 8} (1997), 721--824. 

\bibitem{NazTreVol} F. L. Nazarov, S. R. Treil and A. Volberg, \emph{Bellman function in stochastic optimal control and harmonic analysis (how our Bellman function got its name),} Oper. Theory: Adv. Appl. {\bf 129} (2001),  393-424.

\bibitem{NazVol} F. L. Nazarov and A. Volberg, \emph{Heat extension of the {B}eurling operator and estimates for its norm}, {Rossi\u\i skaya Akademiya Nauk. Algebra i Analiz}, \textbf{15}, No. 4 (2003), 142--158.

 \bibitem{O1}{A. Os\c ekowski}, \emph{Sharp inequalities for differentially subordinate harmonic functions and martingales,} to appear in Canadian Mathematical Bulletin

 \bibitem{O2}{A. Os\c ekowski}, \emph{A sharp weak-type bound for It\^o processes and subharmonic functions,} to appear in Kyoto Journal of Mathematics

\bibitem{O3}{A. Os\c ekowski}, \emph{Maximal inequalities for continuous martingales and their differential subordinates}, Proc. Amer. Math. Soc. \textbf{139} (2011), pp. 721--734.

 

\bibitem{O4}{A. Os\c ekowski}, \emph{Sharp and strict $L^p$-inequalities for Hilbert-space-valued orthogonal martingales,} Electronic Journal of Probability {\bf 16} (2011), pp. 531--551.

\bibitem{O5}{A. Os\c ekowski}, \emph{Sharp LlogL inequality for Differentially Subordinated Martingales,} Illinois Journal of Mathematics {\bf 52,} Vol. 3 (2008), pp. 745--756.
	

\bibitem{Ped}{J. L. Pedersen, {\it Best Bounds in Doob's Maximal Inequality for Bessel Processes}, J. Multivariate Anal. \textbf{75} (2000), 36--46.}
\bibitem{P}{S. K. Pichorides, {\it On the best values of the constants in the theorems of M. Riesz, Zygmund and Kolmogorov}, Studia Math. \textbf{44} (1972), 165--179.}
\bibitem{RY}{D. Revuz and M. Yor, \emph{Continuous martingales and Brownian Motion}, 3rd edition, Springer-Verlag, Berlin, 1999.}
\bibitem{S}{E. M. Stein, {\it Singular Integrals and Differentiability Properties of Functions}, Princeton University Press, 1970.}

\bibitem{VasVol} V. Vasyunin and A. Volberg, \emph{Bellman functions technique in harmonic
analysis}, (sashavolberg.wordpress.com). 

\bibitem{W}{G. Wang, \emph{Differential subordination and strong
differential subordination for continuous-time martingales and
related sharp inequalities}, Ann. Probab. \textbf{23} no. 2 (1995), 522--551.}
\end{thebibliography}
\end{document}